\newtheorem{Theorem}{Theorem}[section]
\newtheorem{Proposition}[Theorem]{Proposition}
\newtheorem{Lemma}[Theorem]{Lemma}
\newtheorem{Corollary}[Theorem]{Corollary}
\theoremstyle{definition}
\newtheorem{Definition}[Theorem]{Definition}
\newtheorem{Remark}[Theorem]{Remark}
\newcommand{\bTheorem}[1]{
\begin{Theorem} \label{T#1} }
\newcommand{\eT}{\end{Theorem}}
\newcommand{\bProposition}[1]{
\begin{Proposition} \label{P#1}}
\newcommand{\eP}{\end{Proposition}}
\newcommand{\bLemma}[1]{
\begin{Lemma} \label{L#1} }
\newcommand{\eL}{\end{Lemma}}
\newcommand{\bCorollary}[1]{
\begin{Corollary} \label{C#1} }
\newcommand{\eC}{\end{Corollary}}
\newcommand{\bRemark}[1]{
\begin{Remark} \label{R#1} }
\newcommand{\eR}{\end{Remark}}
\newcommand{\bDefinition}[1]{
\begin{Definition} \label{D#1} }
\newcommand{\eD}{\end{Definition}}
\newcommand{\dif}{\mathrm{d}}
\newcommand{\mf}{\mathscr{F}}
\newcommand{\mr}{\mathbb{R}}
\newcommand{\prst}{\mathbb{P}}
\newcommand{\p}{\mathbb{P}}
\newcommand{\stred}{\mathbb{E}}
\newcommand{\ind}{\mathbf{1}}
\newcommand{\mn}{\mathbb{N}}
\newcommand{\mt}{\mathbb{T}^N}
\newcommand{\bu}{\mathbf u}
\newcommand{\bq}{\mathbf q}
\newcommand{\tor}{\mathbb{T}^N}
\newcommand{\StoB}{\left(\Omega, \mathfrak{F},(\mathfrak{F}_t )_{t \geq 0},  \mathbb{P}\right)}
\newcommand{\bfw}{\mathbf{w}}
\newcommand{\bfu}{\mathbf{u}}
\newcommand{\bfv}{\mathbf{v}}
\newcommand{\bfq}{\mathbf{q}}
\newcommand{\bfr}{\mathbf{r}}
\newcommand{\bfa}{\mathbf{a}}
\newcommand{\bfY}{\mathbf{Y}}
\newcommand{\bfZ}{\mathbf{Z}}
\newcommand{\bfF}{\mathbf{F}}
\newcommand{\bfC}{\mathbf{C}}
\newcommand{\bfpsi}{\boldsymbol{\psi}}
\newcommand{\ds}{\,\mathrm{d}\sigma}
\newcommand{\bFormula}[1]{
\begin{equation} \label{#1}}
\newcommand{\eF}{\end{equation}}
\newcommand{\Ov}[1]{\overline{#1}}
\newcommand{\vr}{\varrho}
\newcommand{\vu}{\vc{u}}
\newcommand{\vc}[1]{{\bf #1}}
\newcommand{\Div}{{\rm div}_x}
\newcommand{\Grad}{\nabla_x}
\newcommand{\tn}[1]{\mathbb{#1}}
\newcommand{\dx}{\,{\rm d} {x}}
\newcommand{\dt}{\,{\rm d} t }
\newcommand{\vv}{\vc{v}}
\newcommand{\D}{{\rm d}}
\newcommand{\R}{\mathbb{R}}
\newcommand{\E}{\mathbb{E}}
\definecolor{Cgrey}{rgb}{0.85,0.85,0.85}
\definecolor{Cblue}{rgb}{0.50,0.85,0.85}
\definecolor{Cred}{rgb}{1,0,0}
\definecolor{fancy}{rgb}{0.10,0.85,0.10}
\newcommand\Cbox[2]{%
    \newbox\contentbox%
    \newbox\bkgdbox%
    \setbox\contentbox\hbox to \hsize{%
        \vtop{
            \kern\columnsep
            \hbox to \hsize{%
                \kern\columnsep%
                \advance\hsize by -2\columnsep%
                \setlength{\textwidth}{\hsize}%
                \vbox{
                    \parskip=\baselineskip
                    \parindent=0bp
                    #2
                }%
                \kern\columnsep%
            }%
            \kern\columnsep%
        }%
    }%
    \setbox\bkgdbox\vbox{
        \color{#1}
        \hrule width  \wd\contentbox %
               height \ht\contentbox %
               depth  \dp\contentbox
        \color{black}
    }%
    \wd\bkgdbox=0bp%
    \vbox{\hbox to \hsize{\box\bkgdbox\box\contentbox}}%
    \vskip\baselineskip%
}
\date{}
\begin{document}

%%%%%%%%%%%%%%%%%%%%%%%%%%%%%%%%

\title{Local strong solutions to the stochastic compressible Navier--Stokes system}

\author{Dominic Breit \and Eduard Feireisl
\thanks{The research of E.F.~leading to these results has received funding from the
European Research Council under the European Union's Seventh
Framework Programme (FP7/2007-2013)/ ERC Grant Agreement
320078. The Institute of Mathematics of the Academy of Sciences of
the Czech Republic is supported by RVO:67985840.}
\and Martina Hofmanov\' a}

\date{\today}

\maketitle

\centerline{Department of Mathematics, Heriot-Watt University}

\centerline{Riccarton Edinburgh EH14 4AS, UK}

\bigskip

\centerline{Institute of Mathematics of the Academy of Sciences of the Czech Republic}

\centerline{\v Zitn\' a 25, CZ-115 67 Praha 1, Czech Republic}

\bigskip

\centerline{Technical University Berlin, Institute of Mathematics}

\centerline{Stra\ss e des 17. Juni 136, 10623 Berlin, Germany}

\bigskip

%\centerline{Charles University in Prague, Faculty of Mathematics and Physics, Mathematical Institute}
%\centerline{Sokolovsk\' a 83, 186 75 Praha 8, Czech Republic}
%\thanks{Eduard Feireisl acknowledges the support of the project LL1202 in the
%programme ERC-CZ funded
%by the Ministry of Education, Youth and Sports of the Czech Republic.}
%\thanks{The research of E.F. leading to these results has received
%funding from the European Research Council under the European Union's Seventh Framework
%Programme (FP7/2007-2013)/ ERC Grant Agreement 320078. The Institute of Mathematics of the Academy of Sciences of
%the Czech Republic is supported by RVO:67985840. }
%\centerline{Institute of Mathematics of the Academy of Sciences of the Czech Republic}
%\centerline{\v Zitn\' a 25, 115 67 Praha 1, Czech Republic}
%\centerline{Charles University in Prague, Faculty of Mathematics and Physics, Mathematical Institute}
%\centerline{Sokolovsk\' a 83, 186 75 Praha 8,
%Czech Republic}

\begin{abstract}

We study the Navier--Stokes system {describing} the motion of a compressible viscous fluid driven by a nonlinear multiplicative stochastic force. We establish local in time existence (up to a positive stopping time) of a unique solution, which is strong in both PDE and probabilistic sense. Our approach relies on rewriting the problem as a symmetric hyperbolic system augmented by partial diffusion, which is solved via a suitable approximation procedure using the stochastic compactness method and the Yamada--Watanabe type argument based on the  Gy\"{o}ngy--Krylov characterization of convergence in probability. This leads to the existence of a strong (in the PDE sense) pathwise solution. Finally, we use various stopping time arguments to establish the local existence of a unique strong solution to the original problem.

%
% Finally, the Yamada--Watanabe type argument based on the  Gy\"{o}ngy--Krylov characterization of convergence in probability is used to recover a unique strong pathwise solution defined on the original probability space.

\end{abstract}

{\bf Keywords:} Navier--Stokes system, compressible fluids, stochastic forcing, local strong solutions

%\tableofcontents

\section{Introduction}
\label{P}

Stochastic perturbations in the equations of motions are commonly used to model small perturbations (numerical,
empirical, and physical uncertainties) or thermodynamic fluctuations present in
fluid flows. Moreover, it is used for a better understanding of turbulence.
As a consequence stochastic partial differential equations (SPDEs) such as the stochastic Navier--Stokes equations
are gaining more and more interest in fluid mechanical research. First result can be traced back to the pioneering work by Bensoussan end Teman \cite{BeTe} in 1973.
Today there exists an abundant amount of literature concerning the dynamics of incompressible fluids driven by stochastic forcing. We refer to the lecture notes by Flandoli \cite{Fl}, the monograph of Kuksin and
Shyrikian \cite{KukShi} as well as the references cited therein for a recent overview. Definitely much less is known if compressibility of the fluid is taken into account. Fundamental questions of well--posedness and even mere existence of solutions to problems dealing with stochastic perturbations of compressible fluids are, to the best of our knowledge, largely open, with only a few rigorous results available.\\
First existence results were based on a suitable transformation formula that allows to reduce the problem to a random system of PDEs: The stochastic integral does no longer appear and deterministic methods are applicable, see \cite{MR1760377} for the 1D case, \cite{MR1807944} for a rather special periodic 2D. The latter one is based on
the existence theory developed by Va{\u\i}gant and Kazhikhov in \cite{MR1375428}. Finally, the work by Feireisl, Maslowski, Novotn\'y \cite{MR2997374} deals with the 3D case.
 The first ``truly'' stochastic existence result for the compressible Navier--Stokes system perturbed by a general nonlinear multiplicative noise was obtained by Breit, Hofmanov\'a \cite{BrHo}. The existence of the so-called finite energy weak martingale solutions in three space dimensions with periodic boundary conditions was established. Extension of this result to the zero Dirichlet boundary conditions then appeared in \cite{2015arXiv150400951S,MR3385137}. For completeness, let us also mention \cite{BrFeHo2015B} where a singular limit result was proved.

The next step towards a better understanding of stochastic compressible fluids is the so-called relative energy inequality derived in \cite{BrFeHo2015A}.
Among other possible applications, it allows to compare a weak solution to the compressible system with arbitrary (smooth) processes, in particular
with a strong solution of the same problem. This gives rise to the weak--strong uniqueness principle: A weak (in the PDE sense) solution satisfying the energy inequality necessarily coincides with a strong solution emanating from the same initial data, as long as the latter one exists.
In the light of this result,
a natural question to ask is whether or not a strong solution exists at least locally in time. Results concerning the existence of strong solutions in three dimensions, however, do not exists at all.
In the present paper, we fill this gap by showing existence of local-in-time strong solutions (up to a positive stopping time) of the stochastic
compressible Navier--Stokes system
enjoying the regularity properties required by the weak--strong uniqueness principle established in \cite{BrFeHo2015A}.

We consider a stochastic variant of the \emph{compressible barotropic
Navier-Stokes system} describing the time evolution of the mass density $\vr$ and the bulk velocity $\vu$ of a fluid driven by
a nonlinear multiplicative noise. The system of equations reads
\begin{equation} \label{P1}
\D \vr + \Div (\vr \vu) \ \dt = 0
\end{equation}
\begin{equation} \label{P2}
\D (\vr \vu) + \left[ \Div (\vr \vu \otimes \vu) + a \Grad \vr^\gamma \right]  \dt  = \Div \mathbb{S} (\Grad \vu) \ \dt + \mathbb{G} (\vr, \vr \vu) \D W,
\end{equation}
where $\mathbb{S}(\Grad \vu)$ is the standard Newtonian viscous stress tensor,
\begin{equation} \label{P3}
\mathbb{S}(\Grad \vu) = \mu \left( \Grad \vu + \Grad^t \vu - \frac{2}{3} \Div \vu \mathbb{I} \right) + \lambda \Div \vu \mathbb{I}, \qquad \mu > 0, \ \lambda \geq 0.
\end{equation}
The driving process $W$ is a cylindrical Wiener process defined on some probability space $(\Omega,\mathfrak{F},\p)$ and the coefficient $\mathbb{G}$ is generally nonlinear and satisfies suitable growth assumptions. The precise assumptions will be specified in Section \ref{E}.
We focus on the periodic boundary conditions, for which the underlying spatial domain $\mathcal O \subset \R^N$ may be identified with the flat torus
\[
\mathcal O = \mt = \left( (-\pi, \pi)|_{\{ - \pi, \pi \}} \right)^N ,\ N=1,2,3.
\]
The initial conditions are random variables
\begin{equation} \label{P4}
\vr(0,\cdot) = \vr_0, \ \vu(0, \cdot) = \vu_0,
\end{equation}
with sufficient space regularity specified later.

We study the system \eqref{P1}--\eqref{P4} in the framework of solutions that are strong in both PDE and probabilistic sense.
More precisely, such solutions possess sufficient space regularity for \eqref{P1}--\eqref{P4} to be satisfied pointwise (not only in the sense of distributions) and they are defined on a given probability space.
We introduce the notion of \emph{local strong pathwise solutions} which only exists up to a suitable stopping time, see Definition \ref{def:strsol}.
Next, we consider \emph{maximal strong pathwise solutions} which live on a maximal (random) time interval determined by the hypothetical
blow-up of the $W^{2,\infty}$-norm of the velocity $\vu$, see Definition \ref{def:maxsol}.
Our main result, Theorem \ref{thm:main}, then states the existence of a unique maximal strong pathwise solution to problem \eqref{P1}--\eqref{P4}.

The deterministic approach to the local existence problem
for the compressible Navier-Stokes system is usually based on energy estimates. These are derived first for the unknown functions $\vr$, $\vu$ and then,
repeatedly, for their time derivatives up to a sufficient order to guarantee the required smoothness, see the nowadays probably
optimal result by Cho, Choe and Kim \cite{ChoChoeKim}. However, for obvious reasons related to the irregularity of sample paths of the Brownian motion, this technique is not suitable in the stochastic setting. Instead, the required space regularity must be achieved by differentiating the equations only with respect
to the space variables - a typical approach applicable to purely hyperbolic systems. The related references include works on the incompressible stochastic Navier--Stokes system \cite{BeFr,BrzP}, the incompressible stochastic Euler equations \cite{GHVic}, and also quasilinear hyperbolic systems \cite{JUKim}.

Similarly to Kim \cite{JUKim} (see also \cite{GHVic}), we use suitable cut-off operators to render all non-linearities in the equations
globally Lipschitz. The resulting (stochastic) system may admit \emph{global-in-time solutions}. Still, the approach proposed in
\cite{JUKim} and later revisited in \cite{GHVic} cannot be applied in a direct fashion for the following reasons:

\begin{itemize}
\item The energy method is applicable to \emph{symmetric} hyperbolic systems and their viscous perturbations.
\item In order to symmetrize (\ref{P1}), (\ref{P2}), the density must be strictly positive - the system must be out of vacuum.
\item For the density to remain positive at least on a short time interval, the maximum principle must be applied to the transport
equation (\ref{P1}). Accordingly, equation (\ref{P1}) must be solved exactly and not by means of a finite-dimensional approximation.
\item To avoid technical problems with non-local operators in the transport equation, the cut-off must be applied only to the velocity field.
\end{itemize}

In view of these difficulties and anticipating strict positivity of the density, we transform
the problem to a symmetric hyperbolic system perturbed by partial viscosity and the stochastic driving term, see Subsection \ref{subsec:symsyst}. Then  cut-off operators in the spirit of
\cite{JUKim} are applied to the  velocity field and this system is then studied in detail in Section \ref{A}. We use this technique to cut the nonlinear parts as well as to guarantee the nondegeneracy of the density, which leads
to global in time strong martingale solutions to this approximate system. The main ideas of the proof are as follows. First, we adapt a hybrid method similar to the one proposed in \cite{BrHo}: The equation of continuity is solved directly, while
the momentum equation is approximated by a finite dimensional Galerkin scheme. On this level, we are able to gain higher order uniform energy estimates by differentiating
in space.
Then, using the stochastic compactness method, we  prove the existence of a strong martingale solution.
In Subsection \ref{subsec:uniq}   we establish pathwise uniqueness and then the method of Gy\"ongy--Krylov \cite{krylov} is applied to
recover the convergence of the approximate solutions on the original probability
space, see Subsection \ref{subsec:pathwiseexist}. The existence of a unique strong pathwise solution therefore follows.

Finally, in Section \ref{subsec:strong} we employ the results of the previous sections to prove our main result, Theorem \ref{thm:main}.  This last step is in the spirit of the recent treatment of the incompressible Euler system by Glatt-Holtz and Vicol \cite{GHVic}. However, the analysis is more involved due to the complicated structure of \eqref{P1}--\eqref{P4}. We rely on a  delicate combination of stopping time arguments that allow to use the equivalence of \eqref{P1}--\eqref{P4} with the system studied in Section \ref{A}. As a consequence,  also the corresponding existence and uniqueness result may be applied. One of the difficulties originates in the fact that we no longer assume the initial condition to be integrable in $\omega$. Thus the a priori estimates from Section \ref{A} are no longer valid. We present the details of the proof of uniqueness in Subsection \ref{s:un}, the existence of a local strong pathwise solution in Subsections \ref{ex1} and \ref{subsec:ex2} and we conclude with the existence of a maximal strong pathwise solution in Subsection \ref{ex3}.

\section{Preliminaries and main result} \label{E}

We start by introducing the notation and some basic facts used in the text. To begin, we fix
an arbitrarily large time horizon $T>0$.

\subsection{Analytic framework}
The symbols $W^{s,p}(\mt)$ denote the Sobolov spaces of functions having distributional derivatives up to order $s$ integrable in $L^p(\mt)$ for $p\in[1,\infty]$. We will also use $W^{s,2}(\mt)$ for $s \in \R$ to denote the space of distributions $v$ defined on $\mt$ with the finite norm
\begin{equation}\label{trigo}
\left\| v \right\|^2_{W^{s,2}(\mt)} = \sum_{k \in \mathbb{Z}^N} (1 + |k|^s)^{2} |c_k(v)|^2 < \infty,
\end{equation}
where $c_k(v)$ are the Fourier coefficients of $v$ with respect to the standard trigonometric basis $\{ \exp(ik\cdot x) \}_{k \in\mathbb{Z}^N}$.
The shorten notation we will write $\|\cdot\|_{s,p}$ for $\|\cdot\|_{W^{s,p}(\mt)}$
and $\|\cdot\|_p$ for $\|\cdot\|_{L^p(\mt)}$.

The following estimates are standard in the Moser-type calculus and can be found e.g. in Majda \cite[Proposition 2.1]{Majd}.

\begin{enumerate}

\item For $u,v \in W^{s,2} \cap L^\infty(\mt)$ and $|\alpha| \leq s$
\begin{equation}\label{E5}
\left\| \partial^\alpha_x (u v) \right\|_{2} \leq c_s \left( \| u \|_{\infty} \| \nabla^s_x v \|_{2} +
\| v \|_{\infty} \| \nabla^s_x u \|_{2} \right).
\end{equation}

\item For $u \in W^{s,2}(\mt)$, $\nabla_x u \in L^\infty(\mt)$, $v \in W^{s-1,2} \cap L^\infty (\mt)$ and $|\alpha| \leq s$
\begin{equation}\label{E6}
\left\| \partial^\alpha_x (uv) - u \partial^\alpha_x v \right\|_{2} \leq c_s
\left( \| \nabla_x u \|_{\infty} \| \nabla^{s-1}_x v \|_{2} +
\| v \|_{\infty} \| \nabla^s_x u \|_{2} \right).
\end{equation}

\item For $u \in W^{s,2} \cap C(\mt)$, and $F$ $s$-times continuously differentiable function on an open neighborhood of the compact set $G =
{\rm range}[u]$, $|\alpha| \leq s$,
\begin{equation} \label{E7}
\left\| \partial^\alpha_x F(u) \right\|_{L^2(\mt)} \leq c_s \| \partial_u F \|_{C^{s-1}(G)} \| u \|^{|\alpha| - 1}_{L^\infty(\mt)} \|
\partial^\alpha_x u \|_{L^2(\mt)}.
\end{equation}
\color{black}
\end{enumerate}

\subsection{Stochastic framework}

The driving process $W$ is a cylindrical Wiener process defined on some stochastic basis $\StoB$ with a complete, right-continuous filtration, and taking values
in a separable Hilbert space $\mathfrak{U}$. More specifically, $W$ is given by a formal expansion
\[
W(t)=\sum_{k\geq 1} e_k \beta_k(t).
\]
Here $\{ \beta_k \}_{k \geq 1}$ is a family of mutually independent real-valued Brownian motions
with respect to $\StoB$ and $\{e_k\}_{k\geq 1}$ is an orthonormal basis of  $\mathfrak{U}$.
To give the precise definition of the diffusion coefficient $\mathbb{G}$, consider $\rho\in L^2(\mt)$, $\rho\geq0$, $\bfq\in L^2(\mt)$ and let {$\,\mathbb{G}(\rho,\bq):\mathfrak{U}\rightarrow L^2(\mt, \mr^N)$} be defined as follows
$$\mathbb{G}(\rho,\bq)e_k=\mathbf{G}_k(\cdot,\rho(\cdot),\bq(\cdot)).$$
We suppose that
the coefficients $\mathbf{G}_{k}:\mt\times [0,\infty) \times\mr^N\rightarrow\mr^N$ are $C^s$-functions that satisfy uniformly in $x\in\mt$
\begin{equation}\label{FG1}
\vc{G}_k (\cdot, 0 , 0) = 0,
\end{equation}
\begin{equation}
|\nabla^l \vc{G}_k (\cdot, \cdot, \cdot) | \leq \alpha_k, \quad \sum_{k \geq 1} \alpha_k  < \infty \quad \mbox{for all}\ l\in\{1,...,s\},
\label{FG2}
\end{equation}
with $s\in\mn$ specified below.
A typical example we have in mind is
\begin{align}\label{eq:model}
\mathbf{G}_k(x,\rho,\bq)=\bfa_k(x)\rho+\mathbb A_k(x) \bfq,
\end{align}
where $\bfa_k:\mt\rightarrow\R^N$ and $\mathbb A_k:\mt\rightarrow\R^{N\times N}$ are smooth functions, however, our analysis applies to  general nonlinear  coefficients $\mathbf{G}_k$.

We also introduce a new variable $r$ related to $\vr$ through formula
\[
\varrho = \varrho(r) =\left(\frac{\gamma-1}{2a\gamma}\right)^\frac{1}{\gamma-1}r^\frac{2}{\gamma-1},
\]
together with the associated family of diffusion coefficients
\[
\mathbf{F}_k(\cdot, r, \vu) = \frac{1}{\vr(r)} \mathbf{G}_k (\cdot, \vr(r), \vr(r) \vu ).
\]
Note that for the model case \eqref{eq:model} this implies
\[
\mathbf{F}_k(x,r,\bu)=\bfa_k(x)+\mathbb A_k(x)\bfu.
\]

\begin{Remark} \label{bR2}

As we are interested in \emph{strong} solutions for which both $\vr$ and $\vu$ are bounded and $\vr$ is bounded below away from zero, the hypotheses
\eqref{FG2} implies the same property for $\mathbf{F}_k$ restricted to this range. In addition, we have
\begin{align*}
\sum_k|\vc{F}_k(\cdot,r,\bfu)|\leq \,c\,(1+|\bfu|).
\end{align*}
Moreover, it is enough to assume that
\eqref{FG2} holds only locally, meaning on each compact subset of $\mt \times (0, \infty) \times \mr^N$.

\end{Remark}

\medskip

Observe that if $\vr$, $\vc{q}$ are $(\mathfrak{F}_t)$-progressively measurable $L^2(\mt)$-valued processes such that
\[
\vr \in L^2 \Big( \Omega\times[0,T]; L^2(\mt) \Big), \ \vc{q} \in L^2 \Big(\Omega\times[0,T]; L^2(\mt; \mr^N) \Big),
\]
and $\mathbb{G}$ satisfies (\ref{FG1}), (\ref{FG2}), then
the stochastic integral
\[
\int_0^t \mathbb{G}(\vr, \vr \vu) \ {\rm d} W = \sum_{k \geq 1}\int_0^t \vc{G}_k (\cdot, \vr, \vr \vu) \ {\rm d} W_k
\]
is a well-defined  $(\mathfrak{F}_t)$-martingale ranging in $L^2(\mt; \mr^N)$.

Next, we report the following result
by Flandoli and Gatarek \cite[Lemma 2.1]{FlaGat} which allows to show fractional Sobolev regularity in time for a stochastic integral.

\begin{Lemma} \label{flan}
Let $p \geq 2$, $\alpha \in[0, \frac{1}{2})$ be given. Let $\mathbb{G} = \{ \vc{G}_k \}_{k=1}^\infty$ satisfy, for some $m\in\mr$,
\[
\E \left[ \int_0^T \left( \sum_{k=1}^\infty \| \vc{G}_k \|_{W^{m,2}(\mt, \mr^N) }^2 \right)^{p/2} \dt \right] < \infty.
\]
Then
\[
t \mapsto \int_0^t \mathbb{G} \ {\rm d} W \in L^p\Big(\Omega; W^{\alpha,p} \Big(0,T;W^{m,2}(\mt; \mr^N) \Big)\Big),
\]
and there exists a constant $c = c(\alpha,p)$ such that
\[
\begin{split}
\E \left[ \left\| \int_0^t \mathbb{G}\ {\rm d} W \right\|_{W^{\alpha,p} \Big(0,T;W^{m,2}(\mt; \mr^N) \Big) }^p \right]
\leq c(\alpha,p) \E & \left[ \int_0^T \left( \sum_{k=1}^\infty \| \vc{G}_k \|_{W^{m,2}(\mt, \mr^N) }^2 \right)^{p/2}  \dt \right].
\end{split}
\]

\end{Lemma}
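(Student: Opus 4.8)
The plan is to verify directly the Slobodeckij (Gagliardo) characterisation of the target norm. Set $X = W^{m,2}(\mt;\R^N)$ and $Y(t) = \int_0^t\mathbb{G}\,\dif W$, so that $Y(t)-Y(s)=\int_s^t\mathbb{G}\,\dif W$ for $0\le s<t\le T$ and
\[
\|Y\|_{W^{\alpha,p}(0,T;X)}^p = \int_0^T\|Y(t)\|_X^p\,\dif t + \int_0^T\!\!\int_0^T\frac{\|Y(t)-Y(s)\|_X^p}{|t-s|^{1+\alpha p}}\,\dif s\,\dif t .
\]
It thus suffices to bound the expectations of the two terms on the right. Throughout I abbreviate $\Psi(r)=\sum_{k\ge1}\|\vc{G}_k(r)\|_X^2$, so that the hypothesis reads $\E\int_0^T\Psi(r)^{p/2}\,\dif r<\infty$; note also that $Y$ admits a continuous $X$-valued modification, which makes the application of Tonelli's theorem below legitimate.

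The first ingredient is the Burkholder--Davis--Gundy inequality in the Hilbert space $X$: for $0\le s\le t\le T$,
\[
\E\,\|Y(t)-Y(s)\|_X^p \;\le\; c_p\,\E\Big(\int_s^t\Psi(r)\,\dif r\Big)^{p/2}.
\]
Since $p/2\ge1$, Hölder's inequality on $[s,t]$ gives $\big(\int_s^t\Psi\,\dif r\big)^{p/2}\le|t-s|^{p/2-1}\int_s^t\Psi^{p/2}\,\dif r$, hence $\E\|Y(t)-Y(s)\|_X^p\le c_p|t-s|^{p/2-1}\E\int_s^t\Psi(r)^{p/2}\,\dif r$. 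Inserting this into the Gagliardo seminorm and exchanging the order of integration, with $\kappa:=2+\alpha p-\tfrac{p}{2}$,
\[
\E\int_0^T\!\!\int_0^T\frac{\|Y(t)-Y(s)\|_X^p}{|t-s|^{1+\alpha p}}\,\dif s\,\dif t \;\le\; c_p\,\E\int_0^T\Psi(r)^{p/2}\Big(\int\!\!\int_{\{s\wedge t<r<s\vee t\}}\frac{\dif s\,\dif t}{|t-s|^{\kappa}}\Big)\dif r .
\]
For fixed $r\in(0,T)$ the inner integral equals $2\int_0^r\!\int_r^T(t-s)^{-\kappa}\,\dif t\,\dif s$, which an elementary computation shows to be finite and bounded uniformly in $r\in(0,T)$, by a constant depending only on $p$, $\alpha$ and $T$, precisely when $\kappa<2$, i.e. when $\alpha<\tfrac12$. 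Thus the Gagliardo part is bounded by $c(\alpha,p)\,\E\int_0^T\Psi^{p/2}\,\dif r$. The $L^p(0,T;X)$ term is estimated the same way but more directly: BDG with Hölder gives $\E\|Y(t)\|_X^p\le c_pT^{p/2-1}\E\int_0^T\Psi^{p/2}\,\dif r$, which is integrated over $t$. Adding the two contributions yields the asserted inequality with $c=c(\alpha,p)$ (recall $T$ is fixed).

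The one point that demands care is the \emph{sharp} range $\alpha\in[0,\tfrac12)$. It is tempting, right after applying BDG, to bound $\int_s^t\Psi^{p/2}\,\dif r$ by the full integral $\int_0^T\Psi^{p/2}\,\dif r$; but this is wasteful and would only permit $\alpha<\tfrac12-\tfrac1p$, which is vacuous when $p=2$. Retaining the integral over the short interval $[s,t]$ and moving it past the singular kernel $|t-s|^{-1-\alpha p}$ via Tonelli is precisely what generates the extra power of $|t-s|$ needed to reach the whole interval $[0,\tfrac12)$; for $p=2$ this reduces to the classical Fubini argument for It\^o integrals, and the preliminary Hölder step makes the same argument run verbatim for every $p\ge2$.
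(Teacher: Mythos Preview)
Your proof is correct. The paper does not give its own proof of this lemma; it simply quotes it from Flandoli--Gatarek \cite[Lemma~2.1]{FlaGat}, whose argument is essentially the one you have reproduced: BDG in the Hilbert space $X$, H\"older on the short interval $[s,t]$ to extract the factor $|t-s|^{p/2-1}$, and then Tonelli to exchange the singular kernel with the integrand $\Psi^{p/2}$. Your closing remark about keeping the integral over $[s,t]$ rather than replacing it by $\int_0^T\Psi^{p/2}$ is precisely the step that yields the sharp range $\alpha<\tfrac12$ (and is the same refinement used in the original reference). The constant indeed depends on $T$ as you note; since $T$ is fixed throughout the paper this is harmless.
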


\begin{Remark} \label{Rflan}
Note that the above result further implies H\"older continuity of the stochastic integral due to the embedding
\[
W^{\alpha,p} \Big(0,T;W^{m,2}(\mt; \mr^N) \Big) \hookrightarrow C^\beta \Big(0,T;W^{m,2}(\mt; \mr^N) \Big) \quad \mbox{if} \quad
\beta<\alpha-\frac{1}{p}.
\]

\end{Remark}

Combining Lemma \ref{flan}, the hypotheses \eqref{FG1}, \eqref{FG2}, the estimate \eqref{E7}, and the embedding
\[
W^{s,2} (\mt) \hookrightarrow C(\mt), \ s > \frac{N}{2},
\]
we get in addition the following estimate for the stochastic integral appearing in \eqref{P2}.

\begin{Corollary} \label{Cflan}
Let $\vc{G}_k = \vc{G}_k(\vr, \vc{q})$ satisfy \eqref{FG1}, \eqref{FG2} for a nonnegative integer $s$.
Let $p\geq 2$, $\alpha \in[0, \frac{1}{2})$. Suppose that
\[
\vr, \ \vc{q} \in L^{ \beta p}\Big( \Omega \times (0,T); W^{s,2} (\mt) \Big), \ \beta = \max\{s, 1\}.
\]
Then the following holds:

{\bf (i)} If $s = 0$, then
\[
t \mapsto \int_0^t \mathbb{G}(\vr, \vc{q} ) \ {\rm d} W \in L^p\Big(\Omega; W^{\alpha,p} \Big(0,T;L^{2}(\mt; \mr^N) \Big)\Big),
\]
and
\[
\begin{split}
\E \left[ \left\| \int_0^t \mathbb{G}(\vr, \vc{q})\ {\rm d} W \right\|_{W^{\alpha,p} \Big(0,T;L^{2}(\mt; \mr^N) \Big) }^p \right]
\leq c(\alpha,p) \E & \left[ \int_0^T   \| [\vr, \vc{q}] \|_{L^{2}(\mt, \mr^N)}^p  \ \dt \right].
\end{split}
\]

{\bf (ii)} If $s >  \frac{N}{2}$,
then
\[
t \mapsto \int_0^t \mathbb{G}(\vr, \vc{q} ) \ {\rm d} W \in L^p\Big(\Omega; W^{\alpha,p} \Big(0,T;W^{s,2}(\mt; \mr^N) \Big)\Big),
\]
and
\[
\begin{split}
\E \left[ \left\| \int_0^t \mathbb{G}(\vr, \vc{q})\ {\rm d} W \right\|_{W^{\alpha,p} \Big(0,T;W^{s,2}(\mt; \mr^N) \Big) }^p \right]
\leq c(\alpha,p) \E & \left[ \int_0^T   \| [\vr, \vc{q}] \|_{W^{s, 2}(\mt, \mr^N)}^{s p}  \ \dt \right].
\end{split}
\]
\end{Corollary}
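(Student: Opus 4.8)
The plan is to derive Corollary \ref{Cflan} as a direct application of Lemma \ref{flan} combined with the Moser-type estimate \eqref{E7} and the Sobolev embedding $W^{s,2}(\mt)\hookrightarrow C(\mt)$ valid for $s>N/2$. In both cases one verifies the integrability hypothesis of Lemma \ref{flan} with $m=0$ (for part (i)) or $m=s$ (for part (ii)) by estimating $\sum_k\|\vc{G}_k(\vr,\vc{q})\|_{W^{m,2}}^2$ pathwise in terms of the $W^{s,2}$-norms of $\vr$ and $\vc{q}$, and then takes the $p/2$-th power and the time integral and expectation.

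For part (i) the argument is immediate: by \eqref{FG1} and the mean value theorem, $|\vc{G}_k(\cdot,\vr,\vc{q})|\leq\alpha_k(|\vr|+|\vc{q}|)$, hence $\sum_k\|\vc{G}_k(\vr,\vc{q})\|_{L^2}^2\leq\big(\sum_k\alpha_k\big)^2\|[\vr,\vc{q}]\|_{L^2(\mt,\mr^{N+1})}^2$ using \eqref{FG2}, and the claimed bound follows from Lemma \ref{flan} with $m=0$, $\beta=1$. For part (ii), since $s>N/2$ we have $\vr,\vc{q}\in C(\mt)$ with $\|[\vr,\vc{q}]\|_\infty\lesssim\|[\vr,\vc{q}]\|_{W^{s,2}}$, so that the range $G$ of $[\vr,\vc{q}]$ lies in a compact set whose size is controlled by $\|[\vr,\vc{q}]\|_{W^{s,2}}$. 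Applying \eqref{E7} to the $C^s$-function $\vc{G}_k$ composed with $[\vr,\vc{q}]$, together with \eqref{FG2} bounding $\|\partial\vc{G}_k\|_{C^{s-1}(G)}\leq\alpha_k$ uniformly, gives for each multi-index $|\alpha|\leq s$
\[
\|\partial^\alpha_x\vc{G}_k(\vr,\vc{q})\|_{L^2}\leq c_s\,\alpha_k\,\|[\vr,\vc{q}]\|_\infty^{|\alpha|-1}\|\partial^\alpha_x[\vr,\vc{q}]\|_{L^2}\leq c_s\,\alpha_k\,\|[\vr,\vc{q}]\|_{W^{s,2}}^{s},
\]
whence $\big(\sum_k\|\vc{G}_k(\vr,\vc{q})\|_{W^{s,2}}^2\big)^{1/2}\leq c_s\big(\sum_k\alpha_k\big)\|[\vr,\vc{q}]\|_{W^{s,2}}^{s}$. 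Raising to the power $p$, integrating in time and taking expectations, the hypothesis $\vr,\vc{q}\in L^{sp}(\Omega\times(0,T);W^{s,2})$ (i.e. $\beta=\max\{s,1\}=s$) ensures the right-hand side is finite, and Lemma \ref{flan} with $m=s$ delivers both the stated membership and the quantitative estimate.

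The only genuinely delicate point is the bookkeeping in part (ii): one must check that summing the Moser estimate \eqref{E7} over $k$ converges, which hinges on the \emph{uniform} control $\|\partial_u\vc{G}_k\|_{C^{s-1}(G)}\leq\alpha_k$ with $\sum_k\alpha_k<\infty$ coming from \eqref{FG2} (valid uniformly in $x$, and, by Remark \ref{bR2}, it suffices that this hold locally on the relevant compact range of $[\vr,\vc{q}]$), and on tracking the worst power of $\|[\vr,\vc{q}]\|_\infty$, namely $|\alpha|-1\leq s-1$, to conclude the overall homogeneity $s$ in $\|[\vr,\vc{q}]\|_{W^{s,2}}$ that fixes the exponent $\beta p=sp$ in the hypothesis. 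Everything else is a routine combination of H\"older's inequality, the embedding $W^{s,2}\hookrightarrow C$, and the already-established Lemma \ref{flan}.
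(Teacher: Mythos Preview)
Your proof is correct and follows precisely the approach the paper indicates: the corollary is stated in the paper as an immediate consequence of combining Lemma \ref{flan}, hypotheses \eqref{FG1}--\eqref{FG2}, the Moser-type estimate \eqref{E7}, and the embedding $W^{s,2}(\mt)\hookrightarrow C(\mt)$ for $s>N/2$, and you have filled in exactly these steps with the correct bookkeeping.
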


Finally, we define an auxiliary space $\mathfrak{U}_0\supset\mathfrak{U}$ via
$$\mathfrak{U}_0=\bigg\{v=\sum_{k\geq1}\alpha_k e_k;\;\sum_{k\geq1}\frac{\alpha_k^2}{k^2}<\infty\bigg\},$$
endowed with the norm
$$\|v\|^2_{\mathfrak{U}_0}=\sum_{k\geq1}\frac{\alpha_k^2}{k^2},\quad v=\sum_{k\geq1}\alpha_k e_k.$$
Note that the embedding $\mathfrak{U}\hookrightarrow\mathfrak{U}_0$ is Hilbert-Schmidt. Moreover, trajectories of $W$ are $\prst$-a.s. in $C([0,T];\mathfrak{U}_0)$.

\subsection{Main result}

Let us first introduce the notion of local strong pathwise solution. Such a solution is strong in both PDEs and probabilistic sense but possibly exists only locally in time. To be more precise, system \eqref{P1}--\eqref{P2} will be satisfied pointwise (not in the sense of distributions) on the given stochastic basis associated to the cylindrical Wiener process $W$.

\begin{Definition}[Local strong pathwise solution] \label{def:strsol}

Let $\StoB$ be a stochastic basis with a complete right-continuous filtration and let ${W}$ be an $(\mathfrak{F}_t) $-cylindrical Wiener process.
Let
$(\varrho_0,\bfu_0)$ be a $W^{s,2}(\mt)\times W^{s,2}(\mt;\mr^N)$-valued $\mathfrak{F}_0$-measurable random variable, and let $\mathbb{G}$ satisfy \eqref{FG1},
\eqref{FG2}.
A triplet
$(\varrho,\vu,\mathfrak{t})$ is called a local strong pathwise solution to system \eqref{P1}--\eqref{P4} provided
\begin{itemize}
\item $\mathfrak{t}$ is an a.s. strictly positive  $(\mathfrak{F}_t)$-stopping time;
\item the density $\varrho$ is a $W^{s,2}(\mt)$-valued $(\mathfrak{F}_t)$-progressively measurable process satisfying
$$\varrho(\cdot\wedge \mathfrak{t})  > 0,\ \varrho(\cdot\wedge \mathfrak{t}) \in C([0,T]; W^{s,2}(\tor)) \quad \mathbb{P}\text{-a.s.};$$
\item the velocity $\vu$ is a $W^{s,2}(\mt)$-valued $(\mathfrak{F}_t)$-progressively measurable process satisfying
$$ \vu(\cdot\wedge \mathfrak{t}) \in   C([0,T]; W^{s,2}(\tor; \mr^N))\cap L^2(0,T;W^{s+1,2}(\mt; \mr^N))\quad \mathbb{P}\text{-a.s.};$$
\item  there holds $\prst$-a.s.
\[
\begin{split}
\varrho (t\wedge \mathfrak{t}) &= \varrho_0 -  \int_0^{t \wedge \mathfrak{t}} \Div(\varrho\vu ) \ \dif s, \\
(\varrho \vu) (t \wedge \mathfrak{t})  &= \varrho_0 \vu_0 - \int_0^{t \wedge \mathfrak{t}} \Div (\varrho\vu \otimes\vu ) \ \dif s \\
& \qquad+ \int_0^{t \wedge \mathfrak{t}} \Div \tn{S}(\Grad \vu) \ \dif s
- \int_0^{t \wedge \mathfrak{t}}\Grad p(\varrho)\ \dif s + \int_0^{t \wedge \mathfrak{t}} {\tn{G}}(\varrho,\varrho\vu ) \ \D W,
\end{split}
\]
for all $t\in[0,T]$.
\end{itemize}
\end{Definition}

In the above definition, we have tacitly assumed that $s$ is large enough in order to provide sufficient regularity for the strong solutions.
Classical solutions require two spatial derivatives of $\vu$ to be continuous $\prst$-a.s. This motivates the following definition.

\begin{Definition}[Maximal strong pathwise solution]\label{def:maxsol}
Fix a stochastic basis with a cylindrical Wiener process and an initial condition exactly as in Definition \ref{def:strsol}. A quadruplet $$(\varrho,\vu,(\mathfrak{t}_R)_{R\in\mn},\mathfrak{t})$$ is a maximal strong pathwise solution to system \eqref{P1}--\eqref{P4} provided

\begin{itemize}
\item $\mathfrak{t}$ is an a.s. strictly positive $(\mathfrak{F}_t)$-stopping time;
\item $(\mathfrak{t}_R)_{R\in\mn}$ is an increasing sequence of $(\mathfrak{F}_t)$-stopping times such that
$\mathfrak{t}_R<\mathfrak{t}$ on the set $[\mathfrak{t}<T]$,
%$\mathfrak{t}_R<\mathfrak{t}$ on $[\mathfrak{t}<T]$ and 
$\lim_{R\to\infty}\mathfrak{t}_R=\mathfrak t$ a.s. and
\begin{equation}\label{eq:blowup}
\sup_{t\in[0,\mathfrak{t}_R]}\|\vu(t)\|_{2,\infty}\geq R\quad \text{on}\quad [\mathfrak{t}<T] ;
\end{equation}
\item each triplet $(\varrho,\vu,\mathfrak{t}_R)$, $R\in\mn$,  is a local strong pathwise solution in the sense  of Definition \ref{def:strsol}.
\end{itemize}
\end{Definition}

The stopping times $\mathfrak{t}_R$ in Definition 2.6  announce the  stopping time $\mathfrak{t}$ which is therefore predictable.
It denotes the maximal life span of the solution which is   determined by the time of explosion of the $W^{2,\infty}$-norm of the velocity field.  Indeed, it can be seen from \eqref{eq:blowup} that
$$
\sup_{t\in[0,\mathfrak{t})}\|\vu(t)\|_{2,\infty}=\infty\quad \text{on}\quad [\mathfrak{t}<T].
$$
Note that the announcing sequence $(\mathfrak{t}_R)$ is not  unique. Therefore, uniqueness for maximal strong solutions is  understood in the sense that only the solution $(\vr,\vu)$ and its blow up time $\mathfrak{t}$ are unique.

Let us also point out that, later on, we will choose $s$ in order to have the embedding $W^{s,2}\hookrightarrow W^{2,\infty}$, i.e. at least $s>\frac{N}{2}+2$. Even though one might expect that the $W^{s,2}$-norm  blows up earlier than the $W^{2,\infty}$-norm, this is not true. Indeed, according to  Definition \ref{def:strsol} and Definition \ref{def:maxsol}, a maximal strong pathwise solution satisfies
$$\vu(\cdot \wedge \mathfrak{t}_R)\in C([0,T];W^{s,2}(\mt,\mr^N))\qquad\p\text{-a.s.}$$
and hence the velocity is continuous in $W^{s,2}(\mt,\mr^N)$ on $[0,\mathfrak{t})$. Consequently, the blow up of the $W^{s,2}$-norm coincides with the blow up of the $W^{2,\infty}$-norm at time $\mathfrak{t}$. This fact reflects the nature of our a priori estimates (see Subsection \ref{UNIF}): roughly speaking, control of the $W^{2,\infty}$-norm implies control of the $W^{s,2}$-norm and leads to continuity of trajectories in $W^{s,2}$.

Finally, we have all in hand to formulate our main result.

\begin{Theorem}\label{thm:main}
Let $s\in\mn$ satisfy $s>\frac{N}{2} + 3$.
Let the coefficients
$\mathbf{G}_k$ satisfy hypotheses \eqref{FG1}, \eqref{FG2} and let $(\varrho_0,\bfu_0)$ be an $\mathfrak{F}_0$-measurable, $W^{s,2}(\mt)\times W^{s,2}(\mt,\mr^N)$-valued random variable such that $\varrho_0>0$ $\p$-a.s.
%\[
%(\varrho_0,\bfu_0)\in L^p(\Omega,\mathfrak{F}_0,\p;W^{s,2}(\mt)\times W^{s,2}(\mt)), \ \vr_0 > 0  \ \prst\mbox{-a.s.}
%\]
%for all $1\leq p<\infty$ and a positive integer $s>\frac{N}{2} + 4$.
Then
there exists a unique maximal strong pathwise solution $(\varrho,\vu,(\mathfrak{t}_R)_{R\in\mn},\mathfrak{t})$ to problem \eqref{P1}--\eqref{P4} with the initial condition $(\varrho_0,\vu_0)$.
\end{Theorem}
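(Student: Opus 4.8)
The plan is to follow the strategy sketched in the introduction: symmetrize and truncate the system so that all nonlinearities become globally Lipschitz, solve this approximate system on all of $[0,T]$, and then undo the truncation by a chain of stopping times together with a decomposition of $\Omega$ that bypasses the lack of integrability of the initial datum. Passing to the unknown $r=r(\vr)$ with $\vr=\vr(r)=\big(\tfrac{\gamma-1}{2a\gamma}\big)^{1/(\gamma-1)}r^{2/(\gamma-1)}$ turns \eqref{P1} into a pure transport equation $\D r+\big(\vu\cdot\Grad r+\tfrac{\gamma-1}{2}\,r\Div\vu\big)\dt=0$ (no It\^o correction, since \eqref{P1} carries no noise), while dividing \eqref{P2} by $\vr(r)$ produces a symmetric first-order operator in $(r,\vu)$ perturbed by the degenerate second-order term $\tfrac1{\vr(r)}\Div\tn{S}(\Grad\vu)$ and the multiplicative noise $\bfF(r,\vu)\,\D W$. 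Following Kim, we fix a smooth cut-off $\chi_R$ acting on the velocity only, through $\|\vu\|_{2,\infty}$, supplemented by a truncation keeping $\vr$ in a compact subinterval of $(0,\infty)$, and replace every nonlinearity by its cut-off version. Applying the cut-off only to $\vu$ is essential so that the transport equation \eqref{P1} stays local and can be solved exactly, which is what keeps $\vr$ strictly positive on $[0,T]$.

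\textbf{Step 2 (The approximate system).} For the truncated system we use a hybrid scheme as in \cite{BrHo}: \eqref{P1} is solved directly by characteristics/renormalization for the (now bounded and Lipschitz) velocity field, while the momentum equation is discretized by a finite-dimensional Galerkin projection. Differentiating the equations $|\alpha|\le s$ times in $x$ and using the Moser estimates \eqref{E5}--\eqref{E7}, the Gronwall lemma, and Corollary \ref{Cflan} (resp.\ Lemma \ref{flan} for fractional time regularity) for the stochastic term, one obtains bounds uniform in the Galerkin parameter in $L^p\big(\Omega;C([0,T];W^{s,2})\big)\cap L^p\big(\Omega;L^2(0,T;W^{s+1,2})\big)$ for $\vu$ and in $L^p\big(\Omega;C([0,T];W^{s,2})\big)$ for $r$; the hypothesis $s>\tfrac N2+3$ is what makes these commutator estimates close. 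Tightness of the laws then follows, and the Skorokhod representation theorem plus passage to the limit yields a strong martingale solution of the approximate system with the above regularity. Pathwise uniqueness is obtained by an energy estimate for the difference of two solutions—global Lipschitz continuity of all coefficients makes Gronwall applicable—and the Gy\"ongy--Krylov characterization upgrades the martingale solution to a unique strong pathwise solution $(\vr_R,\vu_R)$ on the prescribed basis $\StoB$.

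\textbf{Step 3 (Localization and conclusion).} Set
\[
\mathfrak{t}_R:=T\wedge\inf\big\{t\in[0,T]:\|\vu_R(t)\|_{2,\infty}\ge R\big\}\wedge\inf\big\{t\in[0,T]:\min_{x\in\mt}\vr_R(t,x)\le\tfrac1R\big\}.
\]
On $[0,\mathfrak{t}_R]$ the cut-off is inactive, so $(\vr_R,\vu_R)$ restricted to $[0,\mathfrak{t}_R]$ solves \eqref{P1}--\eqref{P4}; by uniqueness of the approximate problems these stopped solutions are consistent across $R$ and across successive truncation radii, hence glue to a process $(\vr,\vu)$ on $\bigcup_R[0,\mathfrak{t}_R]$ with $\mathfrak{t}=\lim_{R\to\infty}\mathfrak{t}_R$, and each $(\vr,\vu,\mathfrak{t}_R)$ is a local strong pathwise solution. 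Since $\mathfrak{t}_R>0$ whenever $R>\|\vu_0\|_{2,\infty}$ and $\vr_0$ lies strictly inside $[1/R,R]$, we get $\mathfrak{t}>0$ a.s. To handle a merely $\mathfrak{F}_0$-measurable datum we write $\Omega=\bigcup_{M\in\mn}\Omega_M$ with $\Omega_M=\{M-1\le\|(\vr_0,\vu_0)\|_{s,2}<M\}\cap\{\vr_0>1/M\}\in\mathfrak{F}_0$; on each $\Omega_M$ the data lie in $L^\infty\big(\Omega_M;W^{s,2}\big)$ bounded below, so Steps 1--2 apply to $\mathbf 1_{\Omega_M}(\vr_0,\vu_0)$ completed off $\Omega_M$ by a fixed smooth datum, and by uniqueness the resulting solutions agree on overlaps; gluing over $M$—which preserves adaptedness because $\Omega_M\in\mathfrak{F}_0$—gives the solution for the general initial condition. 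Relabelling the $\mathfrak{t}_R$ yields the announcing sequence: on $[\mathfrak{t}<T]$ we must have $\sup_{[0,\mathfrak{t}_R]}\|\vu\|_{2,\infty}\ge R$, since otherwise the a priori estimates of Step 2 together with continuity of $\vu$ in $W^{s,2}$ would permit continuation past $\mathfrak{t}_R$, contradicting its definition; and the density cannot trigger the blow-up because, as long as $\|\vu\|_{2,\infty}$ stays bounded, the transport equation keeps $\vr$ bounded and bounded away from $0$. This gives \eqref{eq:blowup}, hence a maximal strong pathwise solution. Uniqueness of the maximal solution reduces, via the stopping times, to pathwise uniqueness of local strong solutions of \eqref{P1}--\eqref{P4}, proved by comparing two solutions on $[0,\mathfrak{t}^1_R\wedge\mathfrak{t}^2_R]$, closing a Gronwall inequality there, and letting $R\to\infty$.

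\textbf{Main obstacle.} The delicate point is Step 3: the a priori estimates of Step 2 require integrability of the datum in $\omega$, which is dropped in Theorem \ref{thm:main}, so one must patch solutions across the sets $\Omega_M$ while keeping the filtration and all stopping times adapted, and simultaneously verify that the various stopping times—for the velocity cut-off, for the density bounds, and for the $\omega$-decomposition—interlock so that on each stopped interval the truncated system is genuinely equivalent to \eqref{P1}--\eqref{P4}. Making this equivalence precise, and confirming that it is really the $W^{2,\infty}$-norm of $\vu$ (rather than the density or the full $W^{s,2}$-norm) that governs the blow-up, is the technical heart of the argument.
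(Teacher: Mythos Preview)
Your proposal is correct and follows essentially the same route as the paper: symmetrize to the $(r,\vu)$ variables, apply the cut-off $\varphi_R(\|\vu\|_{2,\infty})$ to the velocity only, solve the resulting system globally via the hybrid transport/Galerkin scheme with uniform $W^{s,2}$ estimates, stochastic compactness and Gy\"ongy--Krylov, and then localize by stopping times together with an $\mathfrak{F}_0$-measurable decomposition of $\Omega$ for general data.

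One point where the paper is more careful than your sketch: in Step~3 your stopping time $\mathfrak{t}_R$ mixes a velocity threshold with a density threshold, and you then argue informally that ``the density cannot trigger the blow-up.'' For fixed finite $R$ this is not literally true---the transport estimate only gives $r(t)\ge e^{-cRt}\inf r_0$, which can drop below $1/R$---so your $\mathfrak{t}_R$ need not satisfy \eqref{eq:blowup} directly. The paper separates these roles: it first builds a local solution (with stopping times that may involve $r$), then defines the maximal time abstractly as $\mathfrak{t}=\sup\mathcal{T}$ over the set $\mathcal{T}$ of all admissible stopping times, and only \emph{afterwards} introduces the announcing sequence $\mathfrak{t}_R=\sigma_R\vee\big(\mathfrak{t}\wedge\inf\{t:\|\vu(t)\|_{2,\infty}\ge R\}\big)$ with $\sigma_R\nearrow\mathfrak{t}$ chosen from $\mathcal{T}$, and shows $\mathfrak{t}_R<\mathfrak{t}$ on $[\mathfrak{t}<T]$ by a restart/contradiction argument. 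Your ``relabelling'' remark gestures at this, but the two-step construction (maximal time first, velocity-only announcing sequence second) is what makes \eqref{eq:blowup} come out cleanly.
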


\begin{Remark} \label{mr+}

The required regularity $s > \frac{N}{2} + 3$ is definitely higher than $s > \frac{N}{2} + 2$
for the deterministic problem, see Matsumura and Nishida \cite{MANI}, \cite{MANI1}, Valli and Zajaczkowski \cite{VAZA}. This is due to the loss of regularity with respect to the time variable pertinent
to the stochastic problems. Possibly optimal results could be achieved by working in the framework of $L^p$-spaces as
Cho, Choe, and Kim \cite{ChoChoeKim} and to adapt this approach to the stochastic setting in the spirit of Glatt-Holtz and Vicol \cite{GHVic}.

\end{Remark}

\begin{Remark} \label{bR1}

The method used in the present paper can be easily adapted to handle the same problem on the
whole space $\mathcal O = \R^N$, with relevant far field conditions for $\vr$, $\vu$, say
\[
\vr \to \Ov{\vr} ,\ \vu \to 0 \ \mbox{as}\ |x| \to \infty.
\]
On the other hand, the case when the fluid interacts with a physical boundary, for instance $\mathcal O$ a bounded domain with the no-slip boundary condition for
$\vu$, would require a more elaborate treatment.

\end{Remark}

\begin{Remark}
Let us also point out that most of our analysis applies to the stochastic compressible Euler system as well. Indeed, the only point where we rely on the positive viscosity $\mu$ is the proof of continuity of trajectories of a solution in $W^{s,2}$, see Subsection \ref{subsec:ident}. It is based on the variational approach within a Gelfand triplet which gives a very elegant proof, especially in comparison to the Euler setting where one would need to find another reasoning, cf. \cite{GHVic}.
\end{Remark}

\subsection{Rewriting the equations as a symmetric hyperbolic-parabolic problem}
\label{subsec:symsyst}

It is well known in the context of compressible fluids that existence of strong solutions is intimately related to the strict positivity of the density, i.e. the non-appearance of vacuum states. Anticipating this property in the framework of strong solutions we may rewrite \eqref{P1}--\eqref{P2} as a
hyperbolic-parabolic system for unknowns $r,\bfu$ where $r$ is a function of $\varrho$.
To be more precise, as the time derivative of $\vr$ satisfies the deterministic equation (\ref{P1}), we have
\[
\D (\vr \vu) = \D \vr \ \vu + \vr\  \D \vu,
\]
where, in accordance with \eqref{P1}
\[
\D \vr  = - \Div (\vr \vu) \ \dt.
\]
Consequently, the momentum equation \eqref{P2} reads
\[
\vr \D \vu + \left[ \vr \vu \cdot \Grad \vu + a \Grad \vr^\gamma \right]  \dt  = \Div \mathbb{S} (\Grad \vu) \ \dt + \tn{G} (\vr, \vr \vu) \D W,
\]
or, anticipating strict positivity of the mass density,
\begin{equation*}% \label{E2}
\D \vu + \left[ \vu \cdot \Grad \vu + a \frac{1}{\vr} \Grad \vr^\gamma \right]  \dt  = \frac{1}{\vr} \Div \mathbb{S} (\Grad \vu) \ \dt + \frac{1}{\vr} \tn{G} (\vr, \vr \vu) \D W.
\end{equation*}
Next, we rewrite
\[
a \frac{1}{\vr} \Grad \vr^\gamma = \frac{a \gamma}{\gamma - 1} \Grad \vr^{\gamma - 1} = \frac{2 a \gamma}{\gamma - 1}\vr^{\frac{\gamma - 1}{2}} \Grad \vr^{\frac{\gamma - 1}{2}},
\]
and evoking the renormalized variant of \eqref{P1} (cf. \cite{BrHo})
\[
\D \vr^{\frac{\gamma - 1}{2}} + \vu \cdot \Grad \vr^{\frac{\gamma - 1}{2}}\dt + \frac{\gamma - 1}{2} \vr^{\frac{\gamma - 1}{2}} \Div \vu \dt = 0.
\]
Thus, for a new variable
\[
r \equiv \sqrt{ \frac{2 a \gamma}{\gamma - 1} } \vr^{\frac{\gamma - 1}{2}},
\]
system \eqref{P1}, \eqref{P2} takes the form
\begin{equation} \label{E3}
\D r + \vu \cdot \Grad r \ \dt + \frac{\gamma - 1}{2} r \Div \vu\dt = 0,
\end{equation}
\begin{equation} \label{E4}
\D \vu + \left[ \vu \cdot \Grad \vu + r \Grad r \right]  \dt  = D(r) \Div \mathbb{S} (\Grad \vu) \dt + \tn{F} (r, \vu) \D W,
\end{equation}
where
$$D(r)=\frac{1}{\varrho}=\left(\frac{\gamma-1}{2a\gamma}\right)^{-\frac{1}{\gamma-1}}r^{-\frac{2}{\gamma-1}}, \quad \tn{F} (r, \vu) = \frac{1}{\vr(r)} \tn{G} (\vr(r), \vr(r) \vu).$$
Observe that the left hand side corresponds to a symmetric hyperbolic system, cf. Majda \cite{Majd}, for which higher order energy estimates can be obtained by differentiating \eqref{E3}, \eqref{E4} in $x$ up to order $s$,
cf. Gallagher \cite{Gall2000}, Majda \cite{Majd}. Unlike the more elaborated treatment proposed by Cho, Choe, and Kim \cite{ChoChoeKim}
giving rise to the optimal regularity space for the deterministic
Navier-Stokes system, the energy approach avoids differentiating the equations in the time variable - a procedure that may be delicate in the stochastic setting.

\subsection{Outline of the proof of Theorem \ref{thm:main}}
\label{subsec:outline}

 In the \emph{deterministic} setting, system \eqref{E3}--\eqref{E4} can be solved via an approximation procedure. The so-obtained local in time strong
 solution exists on a maximal time interval, the length of which can be estimated in terms of the
size of the initial data. However, in the \emph{stochastic} setting it is more convenient to work with approximate solutions defined on the whole time interval $[0,T]$. To this end, we introduce suitable cut-off operators applied to the $W^{2,\infty}$-norm of the velocity field.
Specifically, we consider
the approximate system in the form
\begin{align} \label{E3'}
\D r + \varphi_R (\|\vu \|_{2,\infty})\Big[\vu \cdot \Grad r \  + \tfrac{\gamma - 1}{2}\, r\, \Div \vu\Big]\ \dt &= 0,\\
\label{E4'}
\D \vu + \varphi_R(\| \vu \|_{2,\infty})\left[ \vu \cdot \Grad \vu + r \Grad r \right]  \dt & =\varphi_R(\| \vu \|_{2
,\infty}) D(r) \Div \mathbb{S} (\Grad \vu) \ \dt\\& +  \varphi_R(\| \vu \|_{2,\infty})\tn{F} (r, \vu) \D W\nonumber,\\
r(0)=r_{0} ,\quad \bfu(0)&=\bfu_0,\label{approx:initial}
\end{align}
where $\varphi_R:[0,\infty)\rightarrow[0,1]$ are smooth cut-off functions satisfying
\begin{align*}
\varphi_R(y)=\begin{cases}1,\quad &0\leq y\leq R,\\
0,\quad & R+1\leq y.
\end{cases}
\end{align*}

Our aim is to solve \eqref{E3'}--\eqref{approx:initial} via the stochastic compactness method: First, we construct solutions to certain approximated systems, establish tightness of their laws in suitable topologies and finally deduce the existence of a strong martingale solution to \eqref{E3}--\eqref{E4}. The necessary uniform bounds are obtained through
a purely hyperbolic approach by differentiating with respect to the space variable and testing the resulting expression with
suitable space derivative of the unknown functions.

For the above mentioned reasons, the approximated densities must be positive on time intervals of finite length. Therefore the approximation scheme
must be chosen to preserve the maximum principle for (\ref{E3'}). To this end,
the approximate solutions to \eqref{E3'}--\eqref{approx:initial} will be constructed by means of a hybrid method based on
\begin{itemize}
\item
solving the (deterministic) equation of continuity \eqref{E3'} for a given $\vu$ obtaining $r = r[\vu]$;
\item
plugging $r = r[\vu]$ in \eqref{E4'} and using a fixed point argument to get local in time solutions of a Galerkin approximation of \eqref{E4'};
\item
extending the Galerkin solution to $[0,T]$ by means of {\it a priori} bounds.
\end{itemize}

Note that the transport equation \eqref{E3'} is solved exactly in terms of a given velocity field $\vu$
as the cut-off operators apply to $\vu$ only.

\section{The approximated system}
\label{A}

In this section we focus on the approximated system  \eqref{E3'}--\eqref{E4'}. More precisely, our aim is twofold: First, we establish existence of a strong martingale solution for initial data in $L^p(\Omega;W^{s,2}(\mt))$ for all $1\leq p<\infty$ and some $s>\frac{N}{2}+2$; second, we prove pathwise uniqueness provided $s>\frac{N}{2}+3$, which in turn implies existence of a (unique) strong pathwise solution.

To this end, let us introduce these two concepts of strong  solution
for the approximate system \eqref{E3'}--\eqref{E4'}. A strong martingale solution is strong in the PDEs sense but only weak in the probabilistic sense. In other words, the stochastic basis as well as a cylindrical Wiener process cannot be given in advance and become a part of the solution. Accordingly, the initial condition is stated in the form of a initial law. On the other hand, a strong pathwise solution is strong in both PDEs and probabilistic sense, that is, the stochastic elements are given in advance.

\begin{Definition}[Strong martingale solution] \label{def:strsolmart}
Let $\Lambda$ be a Borel probability measure on
$$W^{s,2}(\mt)\times W^{s,2}(\mt,\mr^N).$$
A multiplet
$$\left(\StoB,r,\vu,W\right)$$
is called a strong martingale solution to the approximate system \eqref{E3'}--\eqref{E4'} with the initial law $\Lambda$, provided
\begin{itemize}
\item $\StoB$ is a stochastic basis with a complete right-continuous filtration;
\item ${W}$ is an $( \mathfrak{F}_t ) $-cylindrical Wiener process;
\item $r$ is a $W^{s,2}(\mt)$-valued $(\mathfrak{F}_t)$-progressively measurable process satisfying
$$r \in L^2 \Big(\Omega; C([0,T]; W^{s,2}(\tor)) \Big);$$

\item the velocity $\vu$ is a $W^{s,2}(\mt)$-valued $(\mathfrak{F}_t)$-progressively measurable process satisfying
$$ \vu \in L^2\Big( \Omega; C([0,T]; W^{s,2}(\tor))\cap L^2(0,T;W^{s+1,2}(\mt)) \Big);$$
\item $\Lambda=\p\circ[(r(0),\vu(0))]^{-1};$

\item there holds $\prst$-a.s.
\[
\begin{split}
r(t) &= r({0}) -  \int_0^{t} \varphi_R (\|\vu \|_{2,\infty})\Big[\vu \cdot \Grad r \  + \tfrac{\gamma - 1}{2}\, r\, \Div \vu\Big]\ {\rm d}s, \\
\vu (t)  &= \vu(0) - \int_0^{t} \varphi_R(\| \vu \|_{2,\infty})\left[ \vu \cdot \Grad \vu + r \Grad r \right] \dif s \\
& \qquad+ \int_0^{t} \varphi_R(\| \vu \|_{2
,\infty}) D(r) \Div \mathbb{S} (\Grad \vu)  \dif s
 + \int_0^{t} \varphi_R(\| \vu \|_{2,\infty})\tn{F} (r, \vu) \ \D W,
\end{split}
\]
for all $t\in[0,T]$.
\end{itemize}
\end{Definition}

\begin{Definition}[Strong pathwise solution] \label{def:strsolpath}
Let $\StoB$ be a given stochastic basis with a complete right-continuous filtration and let ${W}$ be a given $( \mathfrak{F}_t ) $-cylindrical Wiener process.
Then $(r,\vu)$
is called a strong pathwise solution to the approximate system \eqref{E3'}--\eqref{E4'} with the initial condition $(r_0,\vu_0)$ provided
\begin{itemize}
\item $r$ is a $W^{s,2}(\mt)$-valued $(\mathfrak{F}_t)$-progressively measurable process satisfying
$$r \in L^2 \Big(\Omega; C([0,T]; W^{s,2}(\tor)) \Big);$$

\item the velocity $\vu$ is a $W^{s,2}(\mt)$-valued $(\mathfrak{F}_t)$-progressively measurable process satisfying
$$ \vu \in L^2\Big( \Omega; C([0,T]; W^{s,2}(\tor))\cap L^2(0,T;W^{s+1,2}(\mt)) \Big);$$

\item there holds $\prst$-a.s.
\[
\begin{split}
r(t) &= r_{0} -  \int_0^{t} \varphi_R (\|\vu \|_{2,\infty})\Big[\vu \cdot \Grad r \  + \tfrac{\gamma - 1}{2}\, r\, \Div \vu\Big]\ {\rm d}s, \\
\vu (t)  &= \vu_0 - \int_0^{t} \varphi_R(\| \vu \|_{2,\infty})\left[ \vu \cdot \Grad \vu + r \Grad r \right] \dif s \\
& \qquad+ \int_0^{t} \varphi_R(\| \vu \|_{2
,\infty}) D(r) \Div \mathbb{S} (\Grad \vu)  \dif s
 + \int_0^{t} \varphi_R(\| \vu \|_{2,\infty})\tn{F} (r, \vu) \ \D W,
\end{split}
\]
for all $t\in[0,T]$.
\end{itemize}
\end{Definition}

The main result of this section reads as follows.

\begin{Theorem}\label{thm:appr}
Let the coefficients
$\mathbf{G}_k$ satisfy hypotheses \eqref{FG1}, \eqref{FG2} and let
\[
(r_0,\bfu_0)\in L^p(\Omega,\mathfrak{F}_0,\p;W^{s,2}(\mt)\times W^{s,2}(\mt))
\]
for all $1\leq p<\infty$ and some $s\in\mn$ such that $s>\frac{N}{2} + 2$. In addition, suppose that
\begin{equation*}
\| r_0 \|_{W^{1,\infty}(\mt)} < R, \ r_0 > \frac{1}{R} \ \prst\mbox{-a.s.}
\end{equation*}
Then there exists a strong martingale solution
to problem \eqref{E3'}--\eqref{E4'} with the initial law $\Lambda=\p\circ[(r_{0},\vu_0)]^{-1}$. Moreover,  there exists a deterministic constant $\underline{r}_R>0$ such that
\[
r(t, \cdot) \geq \underline{r}_R > 0 \quad \mathbb{P}\mbox{-a.s.}\quad \mbox{for all}\ t \in [0,T]
\]
and
\begin{equation} \label{RE2}
\E\bigg[\sup_{t \in [0,T]} \| (r(t),\vu(t)) \|_{s,2} +\int_0^T \| \vu \|^2_{s+1,2} \ \dt  \bigg]^p \leq
c(R,r_0, \vu_0,p) < \infty \quad \mbox{for all}\quad 1 \leq p
< \infty.
\end{equation}

Finally, if $s > \frac{N}{2} + 3$, then  pathwise uniqueness holds true. Specifically, if  $(r^1, \vu^1)$, $(r^2, \vu^2)$ are two strong solutions to \eqref{E3'}--\eqref{E4'} defined on the same stochastic basis with the same Wiener process $W$ and
$$\mathbb{P} \left[ r^1_0  = r^2_0, \ \vu^1_0 = \vu^2_0 \right] = 1,$$
then
\[
\mathbb{P} \left[ r^1 (t) = r^2(t), \ \vu^1(t) = \vu^2(t),  \ \mbox{for all}\ t \in [0,T]\right] = 1.
\]
Consequently, there exists a unique strong pathwise solution to \eqref{E3'}--\eqref{E4'}.

\end{Theorem}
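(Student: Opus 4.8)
The strategy is the standard three-tier scheme for constructing pathwise solutions of SPDEs: (i) build a strong martingale solution by a hybrid approximation plus stochastic compactness, (ii) prove pathwise uniqueness for the approximate system, (iii) invoke the Gyöngy--Krylov characterization to upgrade the martingale solution to a pathwise one. I will carry these out in order.

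\medskip

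\textbf{Step 1: Construction of a strong martingale solution.} Fix the velocity field and solve the (deterministic) transport equation \eqref{E3'} for $r=r[\vu]$; since the cut-off $\varphi_R(\|\vu\|_{2,\infty})$ depends only on $\vu$, this is a linear transport equation with a Lipschitz-in-time, $W^{s,2}$-regular coefficient, so the method of characteristics (or a standard energy estimate using \eqref{E5}--\eqref{E7}) yields a unique solution preserving $W^{s,2}$-regularity and, crucially, the maximum principle: $r[\vu](t,\cdot)\ge \underline r_R>0$ for a deterministic $\underline r_R$ depending on $R$, $r_0$ and $T$ through the Gronwall factor $e^{cRT}$. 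Plug $r=r[\vu]$ into \eqref{E4'} and set up a Galerkin approximation in the spatial variable; the right-hand side is now, thanks to the cut-off and the lower bound on $r$, globally Lipschitz as a map on the finite-dimensional space, so a fixed-point argument gives local-in-time solutions, which extend to $[0,T]$ by the a priori bounds. These a priori bounds --- estimate \eqref{RE2} --- are the heart of the matter: one differentiates \eqref{E3'}, \eqref{E4'} in space up to order $s$, tests with $\partial_x^\alpha r$ and $\partial_x^\alpha\vu$ respectively, exploits the symmetric-hyperbolic structure of the left-hand side so that the top-order terms cancel (using the Moser commutator estimate \eqref{E6}), controls the parabolic term $D(r)\Div\mathbb S(\Grad\vu)$ using the positivity of $r$ and $\mu>0$ (this is where $\int_0^T\|\vu\|_{s+1,2}^2$ is gained), and handles the stochastic term via the Burkholder--Davis--Gundy inequality together with \eqref{FG2} and Remark \ref{bR2}; the cut-off renders every nonlinear coefficient bounded, so Gronwall closes the estimate uniformly in the Galerkin parameter and gives the $L^p(\Omega)$-bound \eqref{RE2}. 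Then Lemma \ref{flan} / Corollary \ref{Cflan} provide fractional-in-time regularity, one obtains tightness of the laws of the approximate solutions on a path space such as $C([0,T];W^{s-1,2})\cap C_w([0,T];W^{s,2})$ (together with the Wiener process and the transport solution map), applies the Jakubowski--Skorokhod representation theorem, and identifies the limit as a martingale solution by the usual argument (passing to the limit in the equation and in the quadratic variation of the martingale part); continuity of trajectories in $W^{s,2}$ is recovered via the variational/Gelfand-triplet approach using $\mu>0$, as indicated in the paper.

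\medskip

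\textbf{Step 2: Pathwise uniqueness.} Suppose $(r^1,\vu^1)$ and $(r^2,\vu^2)$ are two strong solutions on the same stochastic basis with the same $W$ and the same initial datum. Write $R_{12}=r^1-r^2$, $U_{12}=\vu^1-\vu^2$, subtract the equations, and estimate $\|R_{12}\|_{s-1,2}^2 + \|U_{12}\|_{s-1,2}^2$ via Itô's formula. The difference of the cut-off factors $\varphi_R(\|\vu^1\|_{2,\infty})-\varphi_R(\|\vu^2\|_{2,\infty})$ is controlled by $\|U_{12}\|_{2,\infty}\lesssim\|U_{12}\|_{s-1,2}$ using $s-1>\frac N2+2$, i.e. $W^{s-1,2}\hookrightarrow W^{2,\infty}$ --- this is precisely why the sharper threshold $s>\frac N2+3$ is required here (one level of regularity is lost in the uniqueness estimate). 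The difference of the parabolic terms, $\varphi_R(\cdot)D(r^1)\Div\mathbb S(\Grad\vu^1)-\varphi_R(\cdot)D(r^2)\Div\mathbb S(\Grad\vu^2)$, is split into a good term producing $-c\|U_{12}\|_{s,2}^2$ (using $r^i\ge\underline r_R$ and $\mu>0$) and error terms involving $R_{12}$ and lower-order norms of $U_{12}$, absorbed by Young's inequality; the convective and pressure differences are handled by the Moser estimates \eqref{E5}, \eqref{E6} together with the uniform $W^{s,2}$-bounds on $r^i,\vu^i$ coming from \eqref{RE2}; the stochastic term is treated by BDG, using that $\tn F$ is Lipschitz on the relevant range. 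All told one arrives at a differential inequality $\mathrm d\,\mathbb E\,\Phi(t)\le c(\omega)\,\mathbb E\,\Phi(t)\,\dt$ with a random but integrable coefficient --- or, more cleanly, one localizes by a stopping time on which all norms are bounded and removes it at the end --- whence Gronwall yields $\Phi\equiv0$ and pathwise uniqueness.

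\medskip

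\textbf{Step 3: From martingale to pathwise solution.} With existence of a martingale solution and pathwise uniqueness in hand, the Gyöngy--Krylov argument \cite{krylov} applies: take any two approximate sequences (or two copies of the same one) converging in law, consider their joint laws on the product path space, show every such joint law is in fact supported on the diagonal (by the pathwise-uniqueness result applied to the common limit), and conclude via the Gyöngy--Krylov lemma that the original sequence converges in probability on the \emph{original} probability space. The limit is then the desired unique strong pathwise solution, and the bound \eqref{RE2} is inherited by lower semicontinuity.

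\medskip

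\textbf{Main obstacle.} The delicate point is the a priori estimate \eqref{RE2}: closing the $s$-th order energy estimate requires the precise cancellation of the top-order terms afforded by the symmetric hyperbolic structure of \eqref{E3}--\eqref{E4}, careful use of the Moser commutator estimate \eqref{E6} so that commutators stay at order $\le s$, and simultaneous exploitation of the parabolic regularization $D(r)\Div\mathbb S$ --- which is only \emph{partial} (it acts on $\vu$ but not on $r$) and whose coefficient $D(r)=1/\varrho(r)$ must be kept bounded via the maximum principle for \eqref{E3'}. Coordinating the maximum principle, the hyperbolic energy estimate, and the parabolic smoothing within the Galerkin scheme, while keeping all bounds uniform and $L^p(\Omega)$-integrable, is the technically heaviest part; by comparison, the uniqueness estimate of Step 2 and the Gyöngy--Krylov step of Step 3 are routine once the framework and the regularity threshold $s>\frac N2+3$ are fixed.
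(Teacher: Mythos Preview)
Your proposal is correct and follows essentially the same approach as the paper: the hybrid Galerkin scheme (transport solved exactly to preserve the maximum principle, momentum projected), the $s$-th order energy estimate closed via the symmetric-hyperbolic cancellation plus Moser commutators \eqref{E6} and BDG, tightness/Skorokhod/martingale identification, pathwise uniqueness at one order lower (which is exactly why $s>\frac{N}{2}+3$ is needed), and finally Gy\"ongy--Krylov. The only cosmetic differences are that the paper's path space $C([0,T];W^{\beta,2})$ with $\beta<s$ is Polish, so the classical Skorokhod theorem suffices (Jakubowski is unnecessary), and the paper implements the random-coefficient Gronwall in Step~2 via the exponential weight $e^{-\int_0^t G}$ and It\^o's product rule rather than stopping-time localization.
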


The rest of this section is dedicated to the proof of Theorem \ref{thm:appr} which is divided into several parts. First, in Subsection \ref{subsec:galerkin} we construct the approximate solutions to \eqref{E3'}--\eqref{E4'} by employing the hybrid method delineated in Subsection \ref{subsec:outline}. Second, in Subsection \ref{UNIF} we derive higher order energy estimates which hold true uniformly in the approximation parameter $n$. Third, in Subsection \ref{subsec:comp} we perform the stochastic compactness method: we establish tightness of the laws of the approximated solutions and apply the Skorokhod representation theorem. This yields existence of a new probability space with a sequence of random variables converging a.s. Then in Subsection \ref{subsec:ident}, we identify the limit with a strong martingale solution to \eqref{E3'}--\eqref{E4'}. Finally, in Subsection \ref{subsec:uniq} we provide the proof of pathwise uniqueness under the additional assumption that $s>\frac{N}{2}+3$ and in Subsection \ref{subsec:pathwiseexist} we employ the Gy\"ongy-Krylov argument to deduce the existence of a strong pathwise solution.

\subsection{The Galerkin approximation}
\label{subsec:galerkin}

To begin with, observe that for any $\vu \in C([0,T]; W^{2,\infty}(\mt))$, the transport equation (\ref{E3'}) admits a classical solution
$r = r[\vu]$, uniquely determined by the initial datum $r_{0}$. In addition, for a certain universal constant $c$ we have the estimates
\begin{equation} \label{est1}
\begin{split}
\frac{1}{R}
\exp \left( - cR t \right)&\leq
\exp \left( - cR t \right)  \inf_{\mt} r_0  \leq r(t, \cdot) \leq \exp \left(  cR t \right) \sup_{\mt} r_{0} \leq  R \exp \left(  cR t \right) \\
|\Grad r (t,\cdot) | & \leq \exp \left(  cR t \right)|\Grad r_0 |\leq
R \exp \left(  cR t \right) \ t \in [0,T].
\end{split}
\end{equation}

Next, we consider the orthonormal basis $\left\{ \bfpsi_m \right\}_{m=1}^\infty$ of the space $L^{2}(\mt; \R^N)$ formed by trigonometric functions and set
\[
X_n = {\rm span} \left\{ \bfpsi_1, \dots, \bfpsi_n \right\}, \quad \mbox{with the associated projection}\ P_n : L^2 \to X_n.
\]
We look
for approximate solutions $\vu_n$ of \eqref{E4'} belonging to {$L^2\Big( \Omega; C([0,T]; X_n) \Big)$},
satisfying
\begin{equation} \label{est2}
\begin{split}
\D\left< \vu_n, \bfpsi_i \right> &+ \varphi_R(\| \vu_n \|_{2,\infty}) \left< \Big[[\vu_n \cdot \Grad \vu_n + r[\vu_n] \Grad r[\vu_n, r_{0,R}] \Big]; \bfpsi_i \right>  \dt
\\
&=\varphi_R(\| \vu_n \|_{2,\infty}) \left< D(r[\vu_n]) \Div \mathbb{S} (\Grad \vu_n); \bfpsi_i \right> \ \dt\\  &
+  \varphi_R(\| \vu_n \|_{2,\infty}) \left< \tn{F} (r[\vu_n] , \vu_n); \bfpsi_i \right> \D W,\quad i = 1, \dots, n.\\
\vu_n(0)&=P_n \bfu_0.
\end{split}
\end{equation}
As all norms on $X_n$ are equivalent, solutions of \eqref{E3'}, \eqref{est2} can be obtained in a standard way by means of the Banach fixed point argument.
Specifically, we have to show that the mapping
\[
\vu \mapsto \mathscr{T} \vu : X_n \to X_n,
\]
\begin{equation} \label{est3}
\begin{split}
\left< \mathscr{T}\vu; \psi_i \right> =& \left< \vu_0; \psi_i \right>- \int_0^\cdot\varphi_R(\| \vu \|_{2,\infty}) \left< \Big[\vu\cdot \Grad \vu + r[\vu] \Grad r[\vu, r_{0,n} ] \Big]; \bfpsi_i \right>  \dt
\\
&+\int_0^\cdot\varphi_R(\| \vu \|_{2,\infty}) \left< D(r[\vu]) \Div \mathbb{S} (\Grad \vu); \bfpsi_i \right> \ \dt\\  &
+  \int_0^\cdot\varphi_R(\| \vu \|_{2,\infty}) \left< \tn{F} (r[\vu] , \vu); \bfpsi_i \right> \D W,\quad i = 1, \dots, n.
\end{split}
\end{equation}
is a contraction on $\mathcal B=L^2(\Omega;C([0,T^\ast]; X_n))$ for $T^\ast$ sufficiently small. The three components of $\mathscr{T}$ appearing on the right hand side of \eqref{est3}
will be denoted by $\mathscr T_{det}^1$, $\mathscr T_{det}^2$ and $\mathscr T_{sto}$, respectively.
\color{black}

For $r_1 = r[\vu]$, $r_2 = r[\vc{v}]$, we get
\begin{equation*} \label{T8}
\begin{split}
\D (r_1 - r_2) &+ \vc{v}_1 \cdot \Grad (r_1 - r_2) \dt - \frac{\gamma - 1}{2} \Div \vc{v}_1 (r_1 - r_2) \dt \\
&= - \Grad r_2 \cdot (\vc{v}_1 - \vc{v}_2) - \frac{\gamma - 1}{2}  r_2 \Div (\vc{v}_1 - \vc{v}_2) \dt,
\end{split}
\end{equation*}
where we have set
\[
\vc{v}_1 = \varphi_R(\| \vu \|_{2,\infty}) \vu , \ \vc{v}_2 = \varphi_R(\| \vc{v} \|_{2,\infty})\vc{v}.
\]
Consequently, we easily deduce that
\begin{align}\label{eq:new}
\sup_{0\leq t\leq T^\ast}\big\|r[\bfu]-r[\bfv]\big\|^2_{L^2}\leq T^\ast C(n,R,T)\sup_{0\leq t\leq T^\ast} \big\|\bfu-\bfv\big\|_{X_n}^2
\end{align}
noting that $r_1$, $r_2$ coincide at $t=0$ and that $r_j$, $\Grad r_j$ are bounded by a deterministic constant depending on $R$.

As a consequence of \eqref{est1}, \eqref{eq:new} and the equivalence of norms on $X_n$ we can show that the mapping
$\mathscr T_{det}=\mathscr T_{det}^1+\mathscr T_{det}^2$ satisfies the estimate
\begin{align}\label{Tdet}
\|\mathscr{T}_{det}\bfu-\mathscr{T}_{det}\bfv\|_\mathcal{B}^2\leq T^{\ast}C(n,R,T)\|\bfu-\bfv\|_{\mathcal B}^2.
\end{align}
Finally, by Burgholder-Davis-Gundy inequality we have (setting $J_R(\bfw)=\varphi_{R+1}(\|\bfw\|_{2,\infty})\bfw $)
\begin{equation*}
\begin{split}
\|\mathscr{T}_{sto}\bfu&-\mathscr{T}_{sto}\bfv\|_\mathcal{B}^2=\,\stred\sup_{0\leq t\leq T_\ast}\bigg\|\int_0^t\Big(\varphi_R(\|\vu\|_{2,\infty})\mathbb F\big(r[\vu],\vu\big)-\varphi(\|\vv\|_{2,\infty})\mathbb F\big(r[\vv],\vv\big)\Big)\,\dif W\bigg\|_{X_n}^2\\
&\leq C(n,R)\,\stred\int_0^{T^{\ast}}\sum_{k\geq1}\Big\|\,\varphi_R(\|\vu\|_{2,\infty})\bfF_k\big(r[\vu],J_R(\vu)\big)-\varphi_R(\|\vv\|_{2,\infty})\mathbb \bfF_k\big(r[\vv],J_R(\vv)\big)\Big\|_{X_n}^2\dif s\\
&\leq C(n,R)\,\stred\int_0^{T^{\ast}}\big|\varphi_R(\|\vu\|_{2,\infty})-\varphi_R(\|\vv\|_{2,\infty})\big|^2\sum_{k\geq1}\Big\|\,\bfF_k\big(r[\vu],J_R(\vu)\big)\Big\|_{X_n}^2\dif s\\
&+ C(n,R)\,\stred\int_0^{T^{\ast}}\varphi_R(\|\vv\|_{2,\infty})^2\sum_{k\geq1}\Big\|\, \bfF_k\big(r[\vu],J_R(\vu)\big)-\mathbb \bfF_k\big(r[\vv],J_R(\vv)\big)\Big\|_{X_n}^2\dif s.
\end{split}
\end{equation*}
Using the growth conditions for $\bfF_k$ (see \eqref{FG2} and Remark \ref{bR2})
we gain
\begin{align}
\|&\mathscr{T}_{sto}\bfu-\mathscr{T}_{sto}\bfv\|_\mathcal{B}^2\nonumber
\\&\leq T^\ast C(n,R)\bigg(\stred\|\bfu-\bfv\|_{2,\infty}+\,\stred\int_0^{T_{\ast}}\big\|\,r[\vu]-r[\vv]\big\|_{L^2}^2\dif s+\,\stred\int_0^{T_{\ast}}\big\|J_R(\vu)-J_R(\vv)\big\|_{L^2}^2\dif s\bigg)\nonumber\\
&\leq T^{\ast}C(n,R)\|\bfu-\bfv\|_{\mathcal B}^2.\label{Tsto}
\end{align}
Note that the last step was a consequence of \eqref{eq:new} and the equivalence of norms. Combining \eqref{Tdet} and \eqref{Tsto} shows that
$\mathscr T$ is a contraction for {a deterministic (small) time $T^\ast> 0$}. A solution to \eqref{E3'}--\eqref{E4'} on the whole interval $[0,T]$ can be obtained by decomposing it into small subintervals gluing the corresponding solutions together.

\subsection{Uniform estimates}
\label{UNIF}

In this subsection, we derive estimates that hold uniformly for $n \to \infty$, which yields a basis for our compactness argument presented in Subsection \ref{subsec:comp}.
{At this stage, the approximate velocity field $\vu_n$ is smooth in the $x$-variable; whence the corresponding solution
$r_n = r[\vu_n, r_{0,n}]$ of the transport equation (\ref{E3'}) shares the same smoothness with the initial datum $r_0$.}

Let $\alpha$ be a multiindex such that $|\alpha|\leq s$. Differentiating \eqref{E3'} in the $x$-variable we obtain
\begin{align} \label{E8'}
\begin{aligned}
\D \partial^\alpha_x r_n &+ \varphi_R(\| \vu_n \|_{2,\infty})\left[\vu_n \cdot \Grad \partial^\alpha_x r_n \ + \tfrac{\gamma - 1}{2} \,r_n\, \Div \partial^\alpha_x  \vu_n \right]\ \dt  \\
&=\varphi_R(\| \vu_n \|_{2,\infty})\big[ \vu_n \cdot \partial^\alpha_x \Grad  r_n  - \partial^\alpha_x \left( \vu_n \cdot \Grad  r_n \right) \big] \dt\\
&+ \tfrac{\gamma - 1}{2}\varphi_R(\| \vu_n \|_{2,\infty})
\left[r_n \partial^\alpha_x \Div  \vu_n - \partial^\alpha_x \left( r_n  \Div  \vu_n \right) \right] \dt\\
&=:T_1^n \dt+T^n_2 \dt.
\end{aligned}
\end{align}
Similarly, we may use the fact that the spaces $X_n$ are invariant with respect to the spatial derivatives, in particular, we deduce
that
\begin{align} \label{E9'}
\begin{aligned}
\D \left< \partial^\alpha_x \vu_n; \bfpsi_i \right>  &+ \varphi_R(\| \vu_n \|_{2, \infty})\ \left< \left[ \vu_n \cdot \Grad \partial^\alpha_x \vu_n + r_n \Grad \partial^\alpha_x r_n \right] ; \bfpsi_i \right>  \dt\\
&  -  \varphi_R(\| \vu_n \|_{2,\infty}) \left< D(r_n) \Div \mathbb{S} (\Grad \partial^\alpha_x \vu_n); \bfpsi_i \right> \ \dt \\
  &= \varphi_R(\| \vu_n \|_{2,\infty})\left< \left[ \vu_n \cdot \partial^\alpha_x \Grad  \vu_n - \partial^\alpha_x \left( \vu_n \cdot \Grad  \vu_n \right)  \right];
  \bfpsi_i \right>  \dt  \\
&+ \varphi_R(\| \vu_n \|_{2,\infty}) \left< \left[ r_n \partial^\alpha_x \Grad  r_n -
\partial^\alpha_x \left( r_n  \Grad  r_n \right) \right]; \bfpsi_i \right>  \dt  \\
&-  \varphi_R(\| \vu_n \|_{2, \infty}) \left< \left[ D(r_n) \partial^\alpha_x \Div \mathbb{S} (\Grad  \vu_n)
-  \partial^\alpha_x \left( D(r_n) \Div \mathbb{S} (\Grad \vu_n) \right)
 \right]; \bfpsi_i \right> \dt \\& +  \varphi_R(\| \vu_n \|_{2,\infty}) \left< \partial^\alpha_x \tn{F} (r_n, \vu_n); \bfpsi_i \right> \D W\\
&=:T_3^n\dt+T^n_4 \dt+T_5^n\dt+  \varphi_R(\| \vu_n \|_{2, \infty}) \left< \partial^\alpha_x \tn{F} (r_n, \vu_n); \bfpsi_i \right> \D W,\quad i = 1,\dots,n.
\end{aligned}
\end{align}
It follows from \eqref{E6} that the ``error'' terms may be handled as
\begin{equation} \label{E10'}
\begin{split}
\left\| T_1^n\right\|_{2} & \lesssim \varphi_R(\| \vu_n \|_{2,\infty}) \Big[
\| \Grad \vu_n \|_{\infty} \| \Grad^s r_n \|_{2} + \left\| \Grad r_n \right\|_{\infty} \| \Grad^s \vu_n \|_{2} \Big] \\
\left\| T_2^n \right\|_{2} & \lesssim \varphi_R(\| \vu_n \|_{2,\infty}) \Big[
\| \Grad r_n \|_{\infty} \| \Grad^s \vu_n  \|_{2} + \left\| \Div \vu_n  \right\|_{\infty} \| \Grad^s r_n \|_{2} \Big]
\\
\left\|  T_3^n \right\|_{2} & \lesssim \varphi_R(\| \vu_n \|_{2,\infty})
\| \Grad \vu_n \|_{\infty} \| \Grad^s \vu_n  \|_{2}  \\
\left\|  T_4^n \right\|_{2} & \lesssim
\| \Grad r_n \|_{\infty} \| \Grad^s r  _n\|_{2},
\end{split}
\end{equation}
and
\begin{equation} \label{E11'}
\begin{split}
\left\| T_5^n \right\|_{2} & \lesssim \varphi_R(\| \vu_n \|_{2,\infty})
\left\| \Grad D(r_n) \right\|_{\infty} \left\| \Grad^s \mathbb{S} (\Grad \vu_n) \right\|_{2} \\
&\quad +\varphi_R(\| \vu_n \|_{2,\infty})
\left\| \Div \mathbb{S} (\Grad \vu_n) \right\|_{\infty} \left\| \Grad^s D(r_n) \right\|_{2} .
\end{split}
\end{equation}
Multiplying \eqref{E8'} by $\partial^\alpha_x r_n$ and integrating the resulting expression by parts, we observe
$$
\int_{\mt}\vu_n \cdot \Grad \partial^\alpha_x r \partial^\alpha_x r_n\,\dx=-\frac{1}{2}\int_{\mt}\Div\vu_n|\partial^\alpha_x r_n|^2\,\dx;
$$
whence
\begin{align} \label{E12'}
\begin{aligned}
&\left\| \partial^\alpha_x r_n (t) \right\|_{2}^2 + (\gamma - 1)\int_0^t\varphi_R(\| \vu_n \|_{2,\infty}) \int_{\mt} r_n \Div \partial^\alpha_x \vu_n \partial^\alpha_x r_n \dx\ds \\
&\quad\lesssim \left\| \partial^\alpha_x r_0 \right\|_{2}^2 +\int_0^t\varphi_R(\| \vu_n \|_{2, \infty})
\left( \| \vu_n \|_{1, \infty} \| \vr \|_{s,2} + \| r_n \|_{1,\infty} \| \vu _n\|_{s,2} \right)
\|\partial^\alpha_x r_n\|_2\,\ds\\
\end{aligned}
\end{align}
provided $|\alpha| \leq s$.

To apply the same treatment to \eqref{E9'}, we use It\^o's formula for the function $f(\bfC^n)=\int_{\mt}|\partial^\alpha_x\bfu_n|^2\dx$. There holds
\begin{equation} \label{E13'}
\begin{split}
 \left\|\partial^\alpha_x \vu_n(t) \right\|^2_2 \dx &+2\int_0^t\varphi_R(\| \vu_n \|_{2,\infty})\int_{\mt} \left[ \vu_n \cdot \Grad \partial^\alpha_x \vu_n + r_n \Grad \partial^\alpha_x r_n  \right] \cdot \partial^\alpha_x \vu_n \dx\ds \\
 &-2\int_0^t\varphi_R(\| \vu_n\|_{2, \infty})\int_{\mt}  D(r_n) \Div \mathbb{S} (\Grad \partial^\alpha_x \vu_n) \cdot \partial^\alpha_x \vu_n \dx\ds \\
&=  \left\| \partial^\alpha_x P_n\vu_0 \right\|^2  + 2\int_0^t\int_{\mt}\left[ T_3^n+T_4^n+T_5^n\right] \cdot \partial^\alpha_x \vu_n  \dx\ds  \\
& + 2\int_0^t \varphi_R(\| \vu_n \|_{2, \infty})\int_{\mt} \partial^\alpha_x \tn{F} (r_n, \vu_n) \cdot \partial^\alpha_x \vu_n \ \D W\\
&+\sum_{k\geq1} \int_0^t \varphi_R(\| \vu_n \|_{2,\infty})\int_{\mt}| \partial^\alpha_x \vc{F}_k (r_n, \vu_n)|^2 \dx\ds.
\end{split}
\end{equation}
Integrating by parts yields
\[
\begin{split}
\int_{\mt} &\big[ \vu_n \cdot \Grad \partial^\alpha_x \vu_n + r _n \Grad \partial^\alpha_x r _n  \big] \cdot \partial^\alpha_x \vu_n \dx\\&= - \frac{1}{2} \int_{\mt} |\partial^\alpha_x \vu_n |^2 \Div \vu_n \dx
 - \int_{\mt}r_n \Div \partial^\alpha_x \vu_n \partial^\alpha_x r_n \dx  - \int_{\mt} \Grad r_n \cdot \partial^\alpha \vu_n \partial^\alpha_x r_n
\end{split}
\]
as well as
\[
\begin{split}
-\int_{\mt} &\big[ D(r _n) \Div \mathbb{S} (\Grad \partial^\alpha_x \vu_n) \big] \cdot \partial^\alpha_x \vu_n \dx\\&=
 \int_{\mt} \Grad D(r _n) \cdot \mathbb{S} (\Grad \partial^\alpha_x \vu_n) \cdot \partial^\alpha_x \vu_n \dx
 + \int_{\mt} D(r_n) \mathbb{S} (\Grad \partial^\alpha_x \vu_n ) : \Grad \partial^\alpha_x \vu_n \dx
\end{split}
\]

Summing up \eqref{E12'}--\eqref{E13'} and using \eqref{E10'}--\eqref{E11'} we observe that the term containing $r_n\partial^\alpha_x r_n\Div \partial^\alpha_x \vu_n$ on the left hand side cancels out and we may infer that
\begin{align*}
&\left\| (r_n(t),\bfu_n(t)) \right\|^2_{s,2}  + \sum_{|\alpha| \leq s } \int_0^t\int_{\mt}\varphi_R(\| \vu_n \|_{2,\infty}) D(r_n) \mathbb{S} (\Grad \partial^\alpha_x \vu_n ) : \Grad \partial^\alpha_x \vu_n \dx \ds \\
&\quad \lesssim  \left\| (r_0 ,\bfu_0) \right\|^2_{s,2}+
\int_0^t \left[ \varphi_R(\| \vu_n \|_{2, \infty})
\| \vu_n \|_{1, \infty} \Big( \| r_n \|^2_{s,2} + \| \vu \|^2_{s,2} \Big)  + \| r_n \|_{1,\infty} \| r_n \|_{s,2} \| \vu _n\|_{s,2} \right] \dt
 \\
&\quad + \int_0^t \left[ \varphi_R(\| \vu_n \|_{2,\infty})\| \Div \mathbb{S} (\Grad \vu_n ) \|_{\infty} \left\| D(r_n) \right\|_{s,2} \| \vu_n \|_{s,2}
   + \| \Grad D(r_n) \|_{\infty} \| \vu_n \|^2_{s,2}  \right] \ds\\
&\quad +  \int_0^t \varphi_R(\| \vu_n\|_{2,\infty})\int_{\mt}  \partial^\alpha_x \tn{F} (r_n, \vu_n) \cdot \partial^\alpha_x \vu_n \dx\ \D W \\
&\quad +\sum_{k\geq1}\int_0^t \varphi_R(\| \vu_n \|_{2,\infty})\int_{\mt} | \partial^\alpha_x \vc{F}_k (r_n, \vu_n)|^2 \dx \ \ds
\end{align*}
as long as $s > \frac{N}{2} + 2$.

\begin{Remark} \label{estR1}

Note that the above estimate {depends on $R$ only through the cut-off function $\varphi_R$}. Moreover, in accordance with \eqref{est1},
\[
\begin{split}
 \varphi_R(\| \vu_n \|_{2, \infty})
\| \vu_n \|_{1, \infty} + \varphi_R(\| \vu_n \|_{2,\infty})\| \Div \mathbb{S} (\Grad \vu_n ) \|_{\infty} &\lesssim cR, \\
\ \| r_n \|_{1,\infty} + \| \Grad D(r_n) \|_{\infty} &\lesssim c \exp \left( cRT \right) \Big( \| r_{0} \|_{1,\infty} + \| \Grad r_{0} \|_{\infty} \Big) \\
& \lesssim c(R) \exp \left( cRT \right),
\end{split}
\]
and, in view of \eqref{E7}, \eqref{est1},
\[
\left\| D(r_n) \right\|_{s,2} \leq c(R,T) \| r_n \|_{s,2}.
\]

\end{Remark}

In contrast with the preceding part,
the following inequalities \emph{depend on $R$}. Using \eqref{FG2} as well as \eqref{E7} we have
\begin{align*}
\sum_{k\geq1}\int_0^t&\varphi_R(\| \bfu_n \|_{2,\infty})\int_{\mt} |\partial^\alpha_x \vc{F}_k (r_n, \vu_n)|^2 \dx \ \ds\\
&\lesssim \int_0^t \varphi_R(\| \bfu_n \|_{2,\infty}) \int_{\mt}\sum_{k\geq1} |\nabla^{s-1} \vc{F}_k|^2 \,\dx \,\|(r_n,\vu_n)\|_{\infty}^{2(|\alpha|-1)}\|(r_n,\vu_n)\|_{s,2}^2\,\ds \\
&\lesssim c(R,T) \int_0^t \|(r_n,\vu_n)\|_{s,2}^2 \,\ds.
\end{align*}
as well as
\begin{align*}
\E\bigg[\sup_{t\in(0,T)}&\bigg|\int_0^t \varphi_R(\|\vu_n \|_{2,\infty})\int_{\mt} \partial^\alpha_x \tn{F} (r_n, \vu_n) \cdot \partial^\alpha_x \vu_n\,\dx \ \D W\bigg|\bigg]^p\\
&\lesssim
\E\bigg[\sum_{k\geq1}\int_0^T \varphi_R(\| \vu_n \|_{2,\infty})^2\bigg(\int_{\mt} \partial^\alpha_x \vc{F}_k (r_n, \vu_n) \cdot \partial^\alpha_x \vu_n \dx\bigg)^2\dx\dt\bigg]^{\frac{p}{2}}\\
&\lesssim
\E\bigg[\int_0^T \varphi_R(\| \vu_n \|_{2,\infty})^2\bigg(\sum_{k\geq1}\|\vc{F}_k (r_n, \vu_n)\|^2_{s,2}\bigg) \|\vu_n\|^2_{s,2}\dt\bigg]^{\frac{p}{2}}\\
&\lesssim \E\bigg[\int_0^T \varphi_R(\|\vu_n\|_{2,\infty})^2 \|(r_n,\vu_n)\|_\infty^{2(s-1)}\|(r_n,\vu_n)\|_{s,2}^4\,\ds\bigg]^\frac{p}{2}\\
&\lesssim c(R,T)  \E\bigg[\sup_{t\in(0,T)}\| \vu_n\|_{s,2}^2\int_0^T\|(r_n,\vu_n)\|_{s,2}^2\,\ds\bigg]^\frac{p}{2}\\
&\lesssim c(R,T) \left( \kappa \E \left[ \sup_{t\in(0,T)}\| (r_n, \vu_n) \|_{s,2}^{2p} \right] +c(\kappa)\E\bigg[\int_0^T\|(r_n ,\vu_n)\|_{s,2}^2\,\ds\bigg]^p \right)
\end{align*}
where we also took into account the Burgholder-Davis-Gundy and weighted Young inequalities.
Finally, we apply the Gronwall lemma to conclude
\begin{align} \label{E14''}
&\E\bigg[ \left( \sup_{(0,T)}\left\| (r_n,\bfu_n) \right\|^2_{s,2}  +  \int_0^T\int_{\mt}|\nabla^{s+1}\vu_n|^2 \dx \dt \right)^p \bigg]
\lesssim c(R,T, s) \E\bigg[\left\| (r_0,\bfu_0) \right\|^{2p}_{s,2} +1\bigg]
\end{align}
whenever $s > \frac{N}{2} + 2$.

\subsection{Compactness}
\label{subsec:comp}

Now we have all in hand to set up our compactness argument leading to the existence part of Theorem \ref{thm:appr}.
Let us define the path space $\mathcal{X}=\mathcal{X}_r\times\mathcal{X}_\bu\times\mathcal X_W$,
\begin{align*}
\mathcal{X}_\bu&=  C([0,T]; W^{\beta,2}(\mt; \mr^N)) ,\quad
\mathcal{X}_r= C([0,T]; W^{\beta,2}(\mt)) ,\quad\mathcal{X}_W=C([0,T];\mathfrak{U}_0),
\end{align*}
\color{black}
where $\beta < s$ (not necessarily integer) can be chosen arbitrarily close to $s$, in particular, $\beta > \frac{N}{2} + 2$ so that we have the embedding
\[
W^{\beta,2}(\mt)\hookrightarrow W^{2,\infty}(\mt)
\]
needed to pass to the limit in the cut-off operators.

We denote by $\mu_{r_n}$ and $\mu_{\bu_n}$ the law of $r_n$ and $\bu_n$
on the corresponding path space. By $\mu_W$ we denote the law of $W$ on $\mathcal{X}_W$ and their joint law on $\mathcal{X}$ is $\mu^n$.
To proceed, it is necessary to establish tightness of $\{\mu^n;\,n\in\mathbb N\}$.

\begin{Proposition}\label{prop:bfutightness}
The set $\{\mu_{\bu_n};\,n\in\mathbb N\}$ is tight on $\mathcal{X}_\bu$.
\end{Proposition}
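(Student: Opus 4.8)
The plan is to derive tightness of $\{\mu_{\bu_n}\}$ from the uniform higher-order bound \eqref{E14''} (valid for every finite moment), the fractional-in-time regularity of the It\^o integral supplied by Lemma \ref{flan} and Remark \ref{Rflan}, and a routine interpolation plus Arzel\`a--Ascoli compact embedding. Throughout let $q<\infty$ be arbitrarily large; all constants may depend on $R,T,s,q$ and the law of $(r_0,\bu_0)$, but not on $n$. First I would record that \eqref{E14''} makes $\{\bu_n\}$ bounded in $L^q\big(\Omega;C([0,T];W^{s,2}(\mt;\mr^N))\big)$ and in $L^q\big(\Omega;L^2(0,T;W^{s+1,2}(\mt;\mr^N))\big)$, and decompose $\bu_n(t)=P_n\bu_0+Y_n(t)+Z_n(t)$, where $Y_n$ gathers the three deterministic integrals on the right-hand side of \eqref{E4'} and $Z_n(t)=\int_0^t\varphi_R(\|\bu_n\|_{2,\infty})\tn{F}(r_n,\bu_n)\,\D W$.

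Next I would estimate the drift $Y_n$. Using the Moser inequalities \eqref{E5}, \eqref{E7}, the pointwise bounds \eqref{est1} on $r_n$ and $\Grad r_n$, the strict positivity of $r_n$, and the presence of the cut-off $\varphi_R$ (which forces $\|\bu_n\|_{2,\infty}\leq R+1$ on its support), the integrand of $Y_n$ is bounded in $W^{s-1,2}$ by $C(R,T)\big(1+\|\bu_n\|_{s,2}+\|\bu_n\|_{s+1,2}+\|r_n\|_{s,2}\big)$; only the viscous term $\varphi_R\,D(r_n)\Div\tn{S}(\Grad\bu_n)$ requires the $L^2_t$-control of $\nabla^{s+1}\bu_n$. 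By Cauchy--Schwarz in time this gives $\|Y_n(t)-Y_n(\tau)\|_{s-1,2}\leq C\,|t-\tau|^{1/2}$ with a random constant bounded in every $L^q(\Omega)$, so $\{Y_n\}$ is bounded in $L^q\big(\Omega;C^{1/2}([0,T];W^{s-1,2})\big)$, uniformly in $n$.

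For the stochastic term I would check, via Remark \ref{bR2}, \eqref{FG1}, \eqref{FG2} and \eqref{E7}, that $\sum_k\|\varphi_R\,\mathbf{F}_k(r_n,\bu_n)\|_{s-1,2}^2\lesssim C(R,T)\big(1+\|(r_n,\bu_n)\|_{s,2}^2\big)$, so that the hypothesis of Lemma \ref{flan} with $m=s-1$ and exponent $q$ holds by \eqref{E14''}; fixing $\alpha\in(0,\tfrac12)$ and taking $q$ with $\alpha q>1$, Lemma \ref{flan} (equivalently Corollary \ref{Cflan}) together with Remark \ref{Rflan} yields $\{Z_n\}$ bounded in $L^q\big(\Omega;C^\lambda([0,T];W^{s-1,2})\big)$ for some $\lambda>0$, uniformly in $n$. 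Collecting the three pieces (and using again the $C([0,T];W^{s,2})$-bound on $\bu_n$), $\{\bu_n\}$ is bounded, uniformly in $n$, in $L^q(\Omega;\mathcal Y)$ with $\mathcal Y:=L^\infty(0,T;W^{s,2}(\mt;\mr^N))\cap C^\lambda([0,T];W^{s-1,2}(\mt;\mr^N))$.

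It remains to invoke compactness and Markov's inequality. Fix $\beta\in(\max\{s-1,\tfrac N2+2\},s)$, which is possible since $s>\tfrac N2+2$. From $W^{s,2}(\mt)\hookrightarrow\hookrightarrow W^{\beta,2}(\mt)$ and the interpolation inequality $\|v\|_{\beta,2}\leq\|v\|_{s-1,2}^{1-\theta}\|v\|_{s,2}^{\theta}$ with $\theta=\beta-s+1\in(0,1)$, bounded subsets of $\mathcal Y$ are equicontinuous in $W^{\beta,2}$ and pointwise-in-$t$ precompact in $W^{\beta,2}$; hence $\mathcal Y\hookrightarrow\hookrightarrow\mathcal X_\bu=C([0,T];W^{\beta,2})$ by Arzel\`a--Ascoli. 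Given $\eta>0$, pick $M$ with $CM^{-q}<\eta$, where $C=\sup_n\E\|\bu_n\|_{\mathcal Y}^q$; then $K_M:=\{v\in\mathcal Y:\|v\|_{\mathcal Y}\leq M\}$ is compact in $\mathcal X_\bu$ and $\mu_{\bu_n}(\mathcal X_\bu\setminus K_M)=\prst(\|\bu_n\|_{\mathcal Y}>M)\leq CM^{-q}<\eta$ for every $n$, which is the asserted tightness. The main obstacle is the drift estimate of the second paragraph: since \eqref{E14''} only controls $\nabla^{s+1}\bu_n$ in $L^2_t$, the second-order viscous term compels the use of $W^{s-1,2}$ as the auxiliary lower space and of mere $C^{1/2}$-regularity in time for $Y_n$ --- which is nonetheless fully compatible with the $W^{\alpha,q}$, $\alpha<\tfrac12$, time regularity available for the It\^o part, so that the compact-embedding argument still closes.
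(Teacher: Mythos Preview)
Your argument is correct and follows the same blueprint as the paper: decompose $\bu_n$ into drift and stochastic parts, obtain a uniform H\"older-in-time estimate for each in a weaker spatial norm, and combine this with the $C_tW^{s,2}_x$ bound \eqref{E14''} via an Arzel\`a--Ascoli compact embedding into $\mathcal X_\bu=C([0,T];W^{\beta,2})$. The difference lies only in the choice of the auxiliary lower space: the paper uses $L^2(\mt)$, for which the drift integrand is trivially bounded in $L^\infty_tL^2_x$ (the viscous term needs only two spatial derivatives of $\bu_n$, available already from $\bu_n\in C_tW^{s,2}$), yielding any H\"older exponent $\kappa<1$ without Moser-type commutator estimates or the $L^2_tW^{s+1,2}$ control; the stochastic part is then handled by Corollary~\ref{Cflan} at regularity level $0$. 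Your choice of $W^{s-1,2}$ is equally valid but obliges you to track $\|\bu_n\|_{s+1,2}$ through the viscous term and to settle for $C^{1/2}$ time regularity on the drift---more work for no gain here. One minor omission: the Galerkin integrals carry the Fourier projection $P_n$, which you dropped from your $Y_n,Z_n$; since $P_n$ has norm $1$ on every $W^{m,2}$ this is harmless.
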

\begin{proof}
We start with a compact embedding relation
\[
C([0,T]; W^{s,2}(\mt)) \cap C^\gamma ([0,T]; L^2(\mt)) \hookrightarrow\hookrightarrow C([0,T]; W^{\beta,2}(\mt)),\ \gamma > 0, \ \beta < s,
\]
that follows directly from the abstract Arzel\` a-Ascoli theorem.

Due to \eqref{est3}, $\bfu_n$ satisfies
\begin{align*}%\label{eq:gal}
\begin{aligned}
\vu_n(t)&=P_n\vu_0 - \int_0^t\varphi_R(\|\vu_n\|_{2,\infty})P_n\big[ \vu_n \cdot \Grad \vu_n + r_n \Grad r_n \big]  \ds \\ &+\int_0^t \varphi_R(\|\vu_n\|_{2,\infty}) P_n\big[D(r_n) \Div \mathbb{S} (\Grad \vu_n)\big] \ds
 +\int_0^t  \varphi_R(\|\vu_n\|_{2,\infty})  P_n\mathbb{F} (r_n, \vu_n) \D W.
\end{aligned}
\end{align*}
Now we decompose $\bfu_n$ into two parts, namely, $\bfu_n=\bfY_n+\bfZ_n$, where
\begin{equation*}
 \begin{split}
\bfY_n(t)&=P_n\vu_0 - \int_0^t\varphi_R(\|\vu_n\|_{2,\infty})P_n\big[ \vu_n \cdot \Grad \vu_n + r_n \Grad r_n \big]  \ds \\
&+\int_0^t \varphi_R(\|\vu_n\|_{2,\infty})P_n\big[D(r_n) \Div \mathbb{S} (\Grad \vu_n)\big] \ds,\\
\bfZ_n(t)&=\int_0^t  \varphi_R(\|\vu_n\|_{2,\infty})P_n\tn{F} (r_n, \vu_n) \D W.
 \end{split}
\end{equation*}
By \eqref{E14''} and the continuity of $P_n$ on $L^{2}$ we have for any $\kappa\in (0,1)$ that
\begin{equation*}%\label{eq:holderZ2}
\stred \left[ \|\bfY_n\|_{C^\kappa([0,T];L^2(\mt))} \right]\leq c(R),
\end{equation*}
while \eqref{E14''} combined with Corollary \ref{Cflan} (for $s = 0$), Remark \ref{Rflan} yields
the same conclusion for $\vc{Z}_n$, with $0 < \kappa < 1/2$.
\end{proof}

\begin{Proposition}\label{proprtightness}
The set $\{\mu_{r_n};\,n\in\mathbb N\}$ is tight on $\mathcal{X}_r$.
\end{Proposition}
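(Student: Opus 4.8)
The plan is to prove tightness of $\{\mu_{r_n}\}$ on $\mathcal X_r = C([0,T];W^{\beta,2}(\mt))$ by the same scheme used for $\{\mu_{\bu_n}\}$: namely, establish a uniform bound in a space that embeds compactly into $\mathcal X_r$, and then invoke the Arzel\`a--Ascoli type compact embedding
\[
C([0,T]; W^{s,2}(\mt)) \cap C^\kappa([0,T]; L^2(\mt)) \hookrightarrow\hookrightarrow C([0,T]; W^{\beta,2}(\mt)), \quad \kappa>0,\ \beta<s.
\]
The uniform bound in $C([0,T];W^{s,2})$ (in expectation, to any power $p$) is already furnished by \eqref{E14''}, so the only new ingredient needed is a uniform estimate of a H\"older seminorm of $r_n$ in a weaker norm, say $C^\kappa([0,T];L^2(\mt))$ or even $C^1([0,T];W^{s-1,2}(\mt))$.

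The key point that makes the density case \emph{easier} than the velocity case is that \eqref{E3'} contains no stochastic integral: $r_n$ solves the deterministic transport equation driven by the (random but pathwise regular) field $\vu_n$. Hence for $0\le t_1<t_2\le T$ one simply writes
\[
r_n(t_2)-r_n(t_1) = -\int_{t_1}^{t_2}\varphi_R(\|\vu_n\|_{2,\infty})\Big[\vu_n\cdot\Grad r_n + \tfrac{\gamma-1}{2} r_n \Div \vu_n\Big]\,\dif s,
\]
and estimates the integrand. Since $\varphi_R(\|\vu_n\|_{2,\infty})\|\vu_n\|_{1,\infty}\lesssim cR$ (cf. Remark \ref{estR1}), the integrand is bounded in $W^{s-1,2}(\mt)$ — indeed in $L^2(\mt)$ — by $c(R)\|(r_n,\vu_n)\|_{s,2}$ using the Moser estimate \eqref{E5}. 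Therefore
\[
\|r_n(t_2)-r_n(t_1)\|_{L^2(\mt)} \le c(R)\,|t_2-t_1|\,\sup_{[0,T]}\|(r_n,\vu_n)\|_{s,2},
\]
which gives $\|r_n\|_{C^1([0,T];L^2(\mt))}\le c(R)\sup_{[0,T]}\|(r_n,\vu_n)\|_{s,2}$, and taking expectations and using \eqref{E14''} yields
\[
\E\big[\|r_n\|_{C^1([0,T];L^2(\mt))}\big] \le c(R,T)\,\E\big[\|(r_0,\bfu_0)\|_{s,2}^2 + 1\big]^{1/2} < \infty,
\]
uniformly in $n$. In particular the same bound holds with $C^1$ replaced by $C^\kappa$ for any $\kappa\in(0,1)$.

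To conclude, fix $\varepsilon>0$. By \eqref{E14''} and Chebyshev, choose $M$ so that $\mathbb P[\sup_{[0,T]}\|r_n\|_{s,2}>M]<\varepsilon/2$ for all $n$; by the H\"older bound just derived, choose $M'$ so that $\mathbb P[\|r_n\|_{C^\kappa([0,T];L^2)}>M']<\varepsilon/2$ for all $n$. Then the set
\[
B_{M,M'} = \big\{ r : \sup_{[0,T]}\|r\|_{s,2}\le M,\ \|r\|_{C^\kappa([0,T];L^2)}\le M'\big\}
\]
is relatively compact in $\mathcal X_r$ by the compact embedding above, and $\mu_{r_n}(B_{M,M'})\ge 1-\varepsilon$ for every $n$. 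This establishes tightness. I do not anticipate a genuine obstacle here — the argument is strictly simpler than Proposition \ref{prop:bfutightness} because there is no It\^o term and hence no need for Corollary \ref{Cflan} or fractional-in-time stochastic regularity; the only mild care needed is to route the estimate of the transport nonlinearity through the Moser inequality \eqref{E5} so that the constant depends on $R$ (via the cut-off) and on $T$ (via \eqref{est1}) but not on $n$.
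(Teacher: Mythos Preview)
Your proposal is correct and follows essentially the same approach as the paper, which simply says the proof is ``completely analogous to Proposition~\ref{prop:bfutightness} using the equation~\eqref{E3'} for $r_n$ and the uniform estimate~\eqref{E14''}.'' You have filled in the details faithfully and, as you observe, the absence of a stochastic integral in~\eqref{E3'} makes the time-regularity step strictly easier than in the velocity case (yielding $C^1$ rather than merely $C^\kappa$ in time); this is exactly the simplification implicit in the paper's one-line proof.
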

\begin{proof}
The proof is completely analogous to Proposition \ref{prop:bfutightness}
using the equation \eqref{E3'} for $r_n$ and the uniform estimate \eqref{E14''}.
\end{proof}

Since also the law $\mu_W$ is tight as being a Radon measure on the Polish space $\mathcal{X}_W$ we can finally deduce tightness of the joint laws $\mu^n$.

\begin{Corollary}\label{cor:tight}
The set $\{\mu^n;\,n\in\mathbb N\}$ is tight on $\mathcal{X}$.
\end{Corollary}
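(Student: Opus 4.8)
The plan is to obtain tightness of the joint laws $\{\mu^n;\,n\in\mn\}$ as an immediate consequence of the tightness of its three marginals, using the elementary fact that on a finite product of topological spaces equipped with the product topology a finite product of compact sets is again compact, so that tightness is stable under forming products of laws. In this way the substantial work has already been carried out in Propositions \ref{prop:bfutightness} and \ref{proprtightness}, together with the uniform bound \eqref{E14''}.

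First I would record that $\mathcal{X} = \mathcal{X}_r \times \mathcal{X}_\bu \times \mathcal{X}_W$ carries the product topology and that the $\mathcal{X}_r$-, $\mathcal{X}_\bu$- and $\mathcal{X}_W$-marginals of $\mu^n$ are precisely $\mu_{r_n}$, $\mu_{\bu_n}$ and $\mu_W$. By Propositions \ref{proprtightness} and \ref{prop:bfutightness}, the families $\{\mu_{r_n};\,n\in\mn\}$ and $\{\mu_{\bu_n};\,n\in\mn\}$ are tight on $\mathcal{X}_r$ and $\mathcal{X}_\bu$, respectively. For the noise component, $\mu_W$ does not depend on $n$ and is a Radon probability measure on the Polish space $\mathcal{X}_W = C([0,T];\mathfrak{U}_0)$, hence inner regular, so the constant family $\{\mu_W\}$ is trivially tight.

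Next, given $\varepsilon > 0$, I would invoke these three tightness properties to select compact sets $K_r \subset \mathcal{X}_r$, $K_\bu \subset \mathcal{X}_\bu$ and $K_W \subset \mathcal{X}_W$ such that $\mu_{r_n}(\mathcal{X}_r \setminus K_r)$, $\mu_{\bu_n}(\mathcal{X}_\bu \setminus K_\bu)$ and $\mu_W(\mathcal{X}_W \setminus K_W)$ are all strictly smaller than $\varepsilon/3$, the first two bounds being uniform in $n$. Setting $K := K_r \times K_\bu \times K_W$, which is compact in $\mathcal{X}$, and observing that $\mathcal{X} \setminus K$ is contained in the union of the three cylinders $(\mathcal{X}_r \setminus K_r) \times \mathcal{X}_\bu \times \mathcal{X}_W$, $\mathcal{X}_r \times (\mathcal{X}_\bu \setminus K_\bu) \times \mathcal{X}_W$ and $\mathcal{X}_r \times \mathcal{X}_\bu \times (\mathcal{X}_W \setminus K_W)$, a union bound combined with the identification of the marginals gives
\[
\mu^n(\mathcal{X} \setminus K) \leq \mu_{r_n}(\mathcal{X}_r \setminus K_r) + \mu_{\bu_n}(\mathcal{X}_\bu \setminus K_\bu) + \mu_W(\mathcal{X}_W \setminus K_W) < \varepsilon
\]
for every $n \in \mn$. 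Since $\varepsilon > 0$ was arbitrary, this establishes the tightness of $\{\mu^n;\,n\in\mn\}$ on $\mathcal{X}$.

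The argument is entirely soft, so I do not expect any genuine obstacle; the only points deserving (minimal) attention are that the product topology must be used in order for products of compacts to remain compact — which is automatic for the separable metric path spaces at hand — and that finite Borel measures on Polish spaces are automatically inner regular, which takes care of the Wiener marginal. All the real content of the statement sits in the uniform estimate \eqref{E14''} that feeds Propositions \ref{prop:bfutightness} and \ref{proprtightness}.
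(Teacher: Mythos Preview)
Your proposal is correct and follows exactly the approach indicated in the paper: the paper simply remarks that since $\mu_W$ is tight as a Radon measure on the Polish space $\mathcal{X}_W$, tightness of the joint laws $\mu^n$ follows from the tightness of the marginals established in Propositions~\ref{prop:bfutightness} and~\ref{proprtightness}. You have merely made this one-line observation explicit via the standard product-of-compacts argument.
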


Since the path space $\mathcal{X}$ is a Polish space we may use the classical Skorokhod representation theorem. That is, passing to a weakly convergent subsequence $\mu^\varepsilon$ (and denoting by $\mu$ the limit law) we infer the following result.

%
%
%{
%The path space $\mathcal{X}$ is \emph{not} a Polish space and so our compactness argument is based on the
%Jakubowski-Skorokhod representation theorem instead of the classical Skorokhod representation
%theorem, see \cite{jakubow}.
%That is, passing to a weakly convergent subsequence $\mu^\varepsilon$ (and denoting by $\mu$ the limit law) we infer the following result.
%}
%

\begin{Proposition}\label{prop:skorokhod1}
There exists a subsequence $\mu^n$, a probability space $(\tilde\Omega,\tilde{\mathfrak F},\tilde\prst)$ with $\mathcal{X}$-valued Borel measurable random variables $(\tilde r_n,\tilde\bu_n,\tilde W_n)$, $N\in\mn$, and $(\tilde r,\tilde\bu,\tilde W)$ such that
\begin{enumerate}
 \item the law of $(\tilde r_n,\tilde\bu_n,\tilde W_n)$ is given by $\mu^n$, $n\in\mathbb N$,
\item the law of $(\tilde r,\tilde\bu,\tilde W)$ is given by $\mu$,
 \item $(\tilde r_n,\tilde\bu_n,\tilde W_n)$ converges $\,\tilde{\prst}$-a.s. to $(\tilde r,\tilde{\bu},\tilde{W})$ in the topology of $\mathcal{X}$.
\end{enumerate}
\end{Proposition}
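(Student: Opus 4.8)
The plan is to read Proposition \ref{prop:skorokhod1} as a direct instance of the classical Skorokhod representation theorem, so the proof is essentially a two-line deduction from what has already been assembled. First I would observe that the path space $\mathcal{X}=\mathcal{X}_r\times\mathcal{X}_\bu\times\mathcal{X}_W$ is a Polish space: each factor $C([0,T];W^{\beta,2}(\mt))$, $C([0,T];W^{\beta,2}(\mt;\mr^N))$ and $C([0,T];\mathfrak{U}_0)$ is a separable complete metric space, hence so is the finite product. By Corollary \ref{cor:tight} the family $\{\mu^n;\ n\in\mn\}$ is tight on $\mathcal{X}$, so Prokhorov's theorem yields a subsequence, which I will not relabel, and a Borel probability measure $\mu$ on $\mathcal{X}$ with $\mu^n\rightharpoonup\mu$ weakly.

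Next I would apply the Skorokhod representation theorem to the weakly convergent sequence $\mu^n\rightharpoonup\mu$ on the Polish space $\mathcal{X}$. This produces a probability space $(\tilde\Omega,\tilde{\mathfrak F},\tilde\prst)$ together with $\mathcal{X}$-valued Borel random variables $(\tilde r_n,\tilde\bu_n,\tilde W_n)$ and $(\tilde r,\tilde\bu,\tilde W)$ whose laws are $\mu^n$ and $\mu$ respectively, and such that $(\tilde r_n,\tilde\bu_n,\tilde W_n)\to(\tilde r,\tilde\bu,\tilde W)$ $\tilde\prst$-a.s. in the topology of $\mathcal{X}$. Items (1)--(3) are then exactly the conclusion of the theorem; the individual marginal laws of $\tilde r_n$, $\tilde\bu_n$, $\tilde W_n$ are recovered by pushing $\mu^n$ forward under the coordinate projections, and in particular the law of each $\tilde W_n$ on $\mathcal{X}_W$ equals $\mu_W$, the law of the cylindrical Wiener process $W$.

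I do not expect any genuine obstacle at this step, since $\mathcal{X}$ is Polish and the hard analytic work (the uniform bound \eqref{E14''} and Propositions \ref{prop:bfutightness}--\ref{proprtightness}) has already been done. The points that do require care, but which are entirely standard and are postponed to the identification step in Subsection \ref{subsec:ident}, are: that Skorokhod is applied to the \emph{joint} laws $\mu^n$ on the product space, so a single common probability space and a single subsequence serve all components simultaneously; that the new probability space must be endowed with the (normal) filtration generated by $(\tilde r_n,\tilde\bu_n,\tilde W_n)$, with respect to which one verifies that $\tilde W_n$ remains a cylindrical Wiener process and the processes remain progressively measurable; and that, because the laws on $\mathcal{X}$ are preserved, the tilde variables still satisfy the approximate system \eqref{E3'}--\eqref{E4'} (an equality of random variables that transfers since it can be tested against countably many bounded continuous functionals of the trajectories). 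None of these affects the statement of Proposition \ref{prop:skorokhod1} itself, which is thus immediate.
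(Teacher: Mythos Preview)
Your proposal is correct and follows exactly the paper's own approach: the paper states just before the proposition that since $\mathcal{X}$ is Polish one may apply the classical Skorokhod representation theorem to a weakly convergent subsequence obtained via Prokhorov, and this is precisely what you do. The additional remarks you make about filtrations and transferring the equations are indeed deferred to Subsection~\ref{subsec:ident} in the paper as well.
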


\subsection{Identification of the limit}
\label{subsec:ident}

As the next step, we will identify the limit obtained in Proposition \ref{prop:skorokhod1} with a strong martingale solution to \eqref{E3'}--\eqref{E4'}, completing the proof of Theorem \ref{thm:appr}.

Let us first fix some notation that will be used in the sequel. We denote by $\bfr_t$ the operator of restriction to the interval $[0,t]$ acting on various path spaces. In particular, if $X$ stands for one of the path spaces $\mathcal{X}_r,\,\mathcal{X}_{\bfu}$ or $\mathcal{X}_{W}$ and $t\in[0,T]$, we define%\marginpar{\textcolor{red}{what about $\mathcal X_\bfq$?}\rmk{this doesn't have to be included in filtration as $\bfq=\varrho\bfu$ is a measurable function of $\varrho$ and $\bfu$, who are already in the filtration}}
\begin{align*}%\label{restr}
\bfr_t:X\rightarrow X|_{[0,t]},\quad f\mapsto f|_{[0,t]}.
\end{align*}
Clearly, $ \bfr_t$ is a continuous mapping.
Let $(\tilde{\mathfrak F}_t^n)$ and $(\tilde{\mathfrak F}_t)$, respectively, be the $\tilde{\prst}$-augmented canonical filtration of the process $(\tilde r_n,\tilde{\bu}_n,\tilde{W}_n)$ and $(\tilde r,\tilde{\bu},\tilde{W})$, respectively, that is
\begin{equation*}
\begin{split}
\tilde{\mathfrak F}_t^n&=\sigma\big(\sigma\big(\bfr_t\tilde r_n,\,\bfr_t\tilde{\bu}_n,\,\bfr_t \tilde{W}_n\big)\cup\big\{\mathscr M\in\tilde{\mathfrak F};\;\tilde{\prst}(\mathscr M)=0\big\}\big),\quad t\in[0,T],\\
\tilde{\mathfrak F}_t&=\sigma\big(\sigma\big(\bfr_t\tilde{r},\,\bfr_t\tilde{\bu},\,\bfr_t\tilde{W}\big)\cup\big\{\mathscr M\in\tilde{\mathfrak F};\;\tilde{\prst}(\mathscr M)=0\big\}\big),\quad t\in[0,T].
\end{split}
\end{equation*}
We claim that $(\tilde r,\tilde\vu,\tilde W)$ is a strong martingale solution to \eqref{E3'}-\eqref{E4'}. Indeed, in order to identify \eqref{E3'}, let us define the following functional
\begin{align*}
(r,\vu)&\mapsto L(r,\bfu)_t:={r}(t)-{r}(0) + \int_0^t\varphi_R(\|{\vu}\|_{2,\infty})\Big[{\vu} \cdot \Grad {r} \   -\tfrac{\gamma - 1}{2}\,{r}\, \Div {\vu}\Big]\ds.
\end{align*}
Since the couple $(r_n,\vu_n)$ solves \eqref{E3'} on the original probability space, it holds $L(r_n,\vu_n)_t=0$, $t\in[0,T]$. Thus, due to equality of laws we get
$$\tilde\E\|L(\tilde r_n,\tilde\vu_n)_t\|_{2}^2=\E\|L(r_n,\vu_n)_t\|_2^2=0.$$
With Proposition \ref{prop:skorokhod1} and \eqref{E14''} at hand, we may pass to the limit on the left hand side and deduce that $(\tilde r,\tilde\vu)$ solves \eqref{E3'}.

In order to identify \eqref{E4'}, we first note that since $\tilde W_n$ has the same law as $W$, there exists a collection of mutually independent real-valued $(\tilde{\mathfrak F}_t)$-Wiener processes $(\tilde{\beta}^{n}_k)_{k\geq1}$ such that $\tilde{W}_n=\sum_{k\geq1}\tilde{\beta}^{n}_k e_k$ , i.e. there exists a collection of mutually independent real-valued $(\tilde{\mathfrak F}_t)$-Wiener processes $(\tilde{\beta}_k)_{k\geq1}$ such that $\tilde{W}=\sum_{k\geq1}\tilde{\beta}_k e_k$.
As the next step, let us fix times $s,t\in[0,T]$ such that $s<t$ and let
$$h:\mathcal{X}_r|_{[0,s]}\times\mathcal{X}_\bfu|_{[0,s]}\times\mathcal{X}_W|_{[0,s]}\rightarrow [0,1]$$
be a continuous function.
We define functionals
\begin{align*}
(r,\vu)&\mapsto M^n(r,\bfu)_t:={\vu}(t)-{\vu}(0) + \int_0^t\varphi_R(\|{\vu}\|_{2,\infty})P_n\big[ {\vu}\cdot \Grad {\vu} +{r} \Grad {r} \big]  \ds\\
&\qquad\qquad\qquad\qquad  -\int_0^t \varphi_R(\|{\vu}\|_{2,\infty})P_n\big[D({r}) \Div \mathbb{S} (\Grad {\vu})\big] \ds\\
(r,\vu)&\mapsto M(r,\bfu)_t:={\vu}(t)-{\vu}(0) + \int_0^t\varphi_R(\|{\vu}\|_{2,\infty})\big[ {\vu}\cdot \Grad {\vu} +{r}\Grad {r} \big]  \ds\\
&\qquad\qquad\qquad\qquad  -\int_0^t \varphi_R(\|{\vu}\|_{2,\infty})\big[D({r}) \Div \mathbb{S} (\Grad {\vu})\big] \ds.
%,\\
%(r,\vu)&\mapsto N^n(r,\bfu)_t:=\int_0^t \varphi_R(\|{\vu}\|_{2,\infty})P_n \mathbb{F}(r,u) \,\dif r,\rmk{update}\\
%(r,\vu)&\mapsto N^{n,k}(r,\bfu)_t:=\int_0^t \varphi_R(\|{\vu}\|_{2,\infty})P_n \mathbb{F}(r,u) \,\dif r.\rmk{update}
\end{align*}
Since $(r_n,\vu_n)$ satisfies \eqref{E4'} on the original probability space, we have that
$$M^n(r_n,u_n)_t=\int_0^t\varphi_R(\|\vu_n\|_{2,\infty})P_n\mathbb F(r_n,u_n)\,\dif W.$$
Hence $M^n(r_n,\vu_n)$ is an $L^2(\mt)$-valued martingale and if $(f_j)$ is an orthonormal basis of $L^2(\mt)$ then for all $j\in\mn$
$$\E\left[h(\mathbf{r}_s r_n,\mathbf{r}_s \vu_n,\mathbf{r}_s W)\langle M^n(r_n,\vu_n)_t-M^n(r_n,\vu_n)_s,f_j\rangle\right]=0,$$
\begin{align*}
\E\bigg[h(\mathbf{r}_s r_n,\mathbf{r}_s \vu_n,\mathbf{r}_s W)&\bigg(\langle M^n(r_n,\vu_n)_t,f_j\rangle^2-\langle M^n(r_n,\vu_n)_s,f_j\rangle^2\\
&\qquad-\int_s^t\varphi_R(\|\vu_n\|_{2,\infty})\|(P_n\mathbb{F}(r_n,\vu_n))^*f_j\|_\mathfrak{U}^2\,\dif \sigma\bigg)\bigg]=0,
\end{align*}
\begin{align*}
\E\bigg[h(\mathbf{r}_s r_n,\mathbf{r}_s \vu_n,\mathbf{r}_s W)&\bigg(\beta_k(t)\langle M^n(r_n,\vu_n)_t,f_j\rangle-\beta_k(s)\langle M^n(r_n,\vu_n)_s,f_j\rangle\\
&\qquad-\int_s^t\varphi_R(\|\vu_n\|_{2,\infty})\langle e_k,(P_n\mathbb{F}(r_n,\vu_n))^*f_j\rangle_\mathfrak{U}\,\dif \sigma\bigg]=0.
\end{align*}
Equality of laws implies the corresponding three expressions for $(\tilde r_n,\tilde \vu_n)$ and finally due to Proposition \ref{prop:skorokhod1} and the uniform moment estimates from \eqref{E14''} we may pass to the limit to deduce
$$\tilde\E\left[h(\mathbf{r}_s \tilde r,\mathbf{r}_s \tilde\vu,\mathbf{r}_s \tilde W)\langle M(\tilde r,\tilde\vu)_t-M^n(\tilde r,\tilde\vu)_s,f_j\rangle\right]=0,$$
\begin{align*}
\tilde\E\bigg[h(\mathbf{r}_s \tilde r,\mathbf{r}_s \tilde\vu,\mathbf{r}_s\tilde W)&\bigg(\langle M(\tilde r,\tilde\vu)_t,f_j\rangle^2-\langle M(\tilde r,\tilde\vu)_s,f_j\rangle^2\\
&\qquad-\int_s^t\varphi_R(\|\tilde u\|_{2,\infty})\|(\mathbb{F}(\tilde r,\tilde\vu))^*f_j\|_\mathfrak{U}^2\,\dif \sigma\bigg)\bigg]=0,
\end{align*}
\begin{align*}
\tilde\E\bigg[h(\mathbf{r}_s \tilde r,\mathbf{r}_s \tilde \vu,\mathbf{r}_s \tilde W)&\bigg(\tilde\beta_k(t)\langle M(\tilde r,\tilde\vu)_t,f_j\rangle-\tilde\beta_k(s)\langle M(\tilde r,\tilde\vu)_s,f_j\rangle\\
&\qquad-\int_s^t\varphi_R(\|\tilde u\|_{2,\infty})\langle e_k,(\mathbb{F}(\tilde r,\tilde\vu))^*f_j\rangle_\mathfrak{U}\,\dif \sigma\bigg]=0.
\end{align*}
According to \cite[Proposition A.1]{degen} this finally yields \eqref{E4'}
and completes { the existence part of the proof of Theorem \ref{thm:appr}. Note that the strong continuity of
$r$ and $\vu$ in $W^{s,2}(\mt)$ $\prst\text{-a.s.}$ can be deduced directly from the equations.} Indeed, using the variational approach, the momentum equation \eqref{E4'} is solved in the Gelfand triplet
$$W^{s+1,2}(\mt;\mr^N)\hookrightarrow W^{s,2}(\mt;\mr^N)\hookrightarrow W^{s-1,2}(\mt;\mr^N),$$
the stochastic integral has continuous trajectories in $W^{s,2}(\mt;\mr^N)$ due to the uniform estimates, Corollary \ref{Cflan} (part (ii)) and Remark \ref{Rflan}, while 
the coefficients of the deterministic parts in the momentum equation belong to the space $L^2(0,T; W^{s-1,2}(\mt; \mr^N))$. Hence \cite[Theorem 3.1]{KrRo} applies and yields the desired continuity of the velocity field $\vu$.
%the strong continuity 
%of the stochastic integral follows from the uniform estimates and Corollary \ref{Cflan} (part (ii)) and Remark \ref{Rflan}, while 
%the other terms in the momentum equation belong to the space $L^2(0,T; W^{s-1}(\mt; \mr^N))$ $\prst-$a.s. Seeing that the velocity $\vu$ itself belongs 
%to $L^2(0,T; W^{s+1}(\mt; \mr^N))$ $\prst-$a.s. the desired continuity of the ``deterministic'' part of 
%$\vu \in C([0,T]; W^{s,2}(\mt; \mr^N))$ follows by interpolation.
The continuity of $r$ then follows from the equation of continuity.

\subsection{Pathwise uniqueness}
\label{subsec:uniq}

To show pathwise uniqueness, we mimick the approach of Subsection \ref{UNIF}. The difference of two solutions
$(r^j, \vu^j)$, $j =1,2$, satisfies
\begin{equation}\label{UU1}
\begin{split}
\D \partial^\alpha_x (r^1 - r^2)  &= - \varphi_R \left( \| \vu^1 \|_{W^{2, \infty}} \right) \partial^\alpha_x \left( \vu^1 \cdot \Grad r^1  + \frac{\gamma-1}{2} r^1 \Div \vc{u}^1 \right) \dt
\\ &+ \varphi_R \left( \| \vu^2 \|_{W^{2, \infty}} \right) \partial^\alpha_x \left( \vu^2 \cdot \Grad r^2  + \frac{\gamma-1}{2} r^2 \Div \vc{u}^2 \right) \dt,
\end{split}
\end{equation}
and
\[
\begin{split}
\D  \partial^\alpha_x (\vc{u}_1 - \vc{u}_2 )   &= - \varphi_R \left( \| \vu_1 \|_{W^{2, \infty}} \right) \partial^\alpha_x \Big( \vu_1 \cdot \Grad \vu_1   + r_1 \Grad r_1 - D(r_1) \Div \mathbb{S}(\Grad \vu_1) \Big) \dt \\
&+\varphi_R \left( \| \vu_2 \|_{W^{2, \infty}} \right) \partial^\alpha_x \Big( \vu_2 \cdot \Grad \vu_2   + r_2 \Grad r_2 - D(r_2) \Div \mathbb{S}(\Grad \vu_2) \Big) \dt\\
&+  \Big[ \varphi_R \left( \| \vu_1 \|_{W^{2, \infty}} \right) \partial^\alpha_x \mathbb{F}(r_1, \vc{u}_1) - \varphi_R \left( \| \vu_2 \|_{W^{2, \infty}} \right) \partial^\alpha_x \mathbb{F}(r_2, \vc{u}_2)
\Big]  {\rm d}W
\end{split}
\]
for $|\alpha| \leq m$.

Multiplying (\ref{UU1}) on $\partial^\alpha_x (r^1 - r^2)$, we get
\begin{equation}\label{UU2}
\begin{split}
\frac{1}{2} \D \left| \partial^\alpha_x (r^1 - r^2) \right|^2  &= - \varphi_R \left( \| \vu^1 \|_{W^{2, \infty}} \right) \partial^\alpha_x \left( \vu^1 \cdot \Grad r^1  + \frac{\gamma-1}{2} r^1 \Div \vc{u}^1 \right) \partial^\alpha_x (r^1 - r^2)\dt
\\ &+ \varphi_R \left( \| \vu^2 \|_{W^{2, \infty}} \right) \partial^\alpha_x \left( \vu^2 \cdot \Grad r^2  + \frac{\gamma-1}{2} r^2 \Div \vc{u}^2 \right) \partial^\alpha_x (r^1 - r^2)\dt.
\end{split}
\end{equation}
Similarly, using It\^{o}'s product rule we obtain
\begin{equation} \label{UU3}
\begin{split}
\frac{1}{2} & \D  \left| \partial^\alpha_x (\vc{u}^1 - \vc{u}^2 ) \right|^2 \\  &= - \varphi_R \left( \| \vu^1 \|_{W^{2, \infty}} \right) \partial^\alpha_x \Big( \vu^1 \cdot \Grad \vu^1   + r^1 \Grad r^1 - D(r^1) \Div \mathbb{S}(\Grad \vu^1) \Big) \cdot \partial^\alpha_x (\vc{u}^1 - \vc{u}^2 ) \dt \\
&+\varphi_R \left( \| \vu^2 \|_{W^{2, \infty}} \right) \partial^\alpha_x \Big( \vu^2 \cdot \Grad \vu^2   + r^2 \Grad r^2 - D(r^2) \Div \mathbb{S}(\Grad \vu^2) \Big) \cdot \partial^\alpha_x (\vc{u}^1 - \vc{u}^2 )\dt\\
&+  \Big[ \varphi_R \left( \| \vu^1 \|_{W^{2, \infty}} \right) \partial^\alpha_x \mathbb{F}(r^1, \vc{u}^1) - \varphi_R \left( \| \vu^2 \|_{W^{2, \infty}} \right) \partial^\alpha_x \mathbb{F}(r^2, \vc{u}^2)
\Big] \cdot \partial^\alpha_x (\vc{u}^1 - \vc{u}^2 )  {\rm d}W\\
&+ \frac{1}{2} \Big( \varphi_R \left( \| \vu^1 \|_{W^{2, \infty}} \right) \partial^\alpha_x \mathbb{F}(r^1, \vc{u}^1) - \varphi_R \left( \| \vu^2 \|_{W^{2, \infty}} \right) \partial^\alpha_x \mathbb{F}(r^2, \vc{u}^2)
\Big)^2 {\rm d}t
\end{split}
\end{equation}
Now observe, by virtue of the standard embedding relation,
\[
\left| \varphi_R \left( \| \vu^1 \|_{W^{2, \infty}} \right)-  \varphi_R \left( \| \vu^2 \|_{W^{2, \infty}} \right)
\right| \leq c_1(R) \left\| \vu^1 - \vu^2 \right\|_{W^{2, \infty}} \leq c_2 (R) \left\| \vu^1 - \vu^2 \right\|_{W^{m, 2}}
\]
as soon as $m > \frac{N}{2} + 2$. Thus we sum \eqref{UU2}, \eqref{UU3}, integrate over the physical space, and perform the same estimates as in Section \ref{UNIF} noting that the highest order terms in \eqref{UU2} read
\[
\begin{split}
\varphi_R \left( \| \vu^1 \|_{W^{2, \infty}} \right) &\int_{ \tor}\left( \vu^1 \cdot \Grad \partial^\alpha_x r^1 - \vu^2 \cdot \Grad \partial^\alpha_x r^2 \right)
\partial^\alpha_x \left( r^1 - r^2 \right) \ \dx\\
+ \frac{\gamma - 1}{2} \varphi_R \left( \| \vu^1 \|_{W^{2, \infty}} \right) &\int_{ \tor} \left( r^1 \Div \partial^\alpha_{x} \vu^1 - r^2 \Div \partial^\alpha_{x} \vu^2 \right) \partial^\alpha_x \left( r^1 - r^2 \right) \dx\\
= \varphi_R \left( \| \vu^1 \|_{W^{2, \infty}} \right) &\int_{ \tor}\left( (\vu^1 - \vu^2) \cdot \Grad \partial^\alpha_x r^1
\right)\partial^\alpha_x \left( r^1 - r^2 \right)
 + \frac{1}{2} \Div \vu^2
\left| \partial^\alpha_x \left( r^1 - r^2 \right) \right|^2 \ \dx\\
+ \frac{\gamma - 1}{2} \varphi_R \left( \| \vu^1 \|_{W^{2, \infty}} \right) &\int_{ \tor}  (r^1  - r^2) \Div \partial^\alpha_{x} \vu^2  \partial^\alpha_x \left( r^1 - r^2 \right) \dx \\
+ \frac{\gamma - 1}{2} \varphi_R \left( \| \vu^1 \|_{W^{2, \infty}} \right) &\int_{ \tor} r^1 \Div \partial^\alpha_{x} (\vu^1 - \vu^2)
\partial^\alpha_x \left( r^1 - r^2 \right) \dx
\end{split}
\]
where the last integral
\[
\varphi_R \left( \| \vu^1 \|_{W^{2, \infty}} \right) \int_{\mt} r^1 \Div \left( \partial^\alpha_x (\vu^1 - \vu^2) \right) \partial^\alpha_x
(r^1 - r^2)\ \dx
\]
cancels, after by parts integration, with its counterpart in \eqref{UU3}, namely
\[
\varphi_R \left( \| \vu^1 \|_{W^{2, \infty}} \right) \int_{\mt} r^1 \left( \partial^\alpha_x (\vu^1 - \vu^2) \right) \cdot \Grad \partial^\alpha_x
(r^1 - r^2)\ \dx.
\]

Thus we deduce, exactly as in Subsection \ref{UNIF},
\begin{equation*}% \label{UU4}
\begin{split}
\D &\left( \left\| r^1 - r^2 \right\|^2_{W^{m,2}} + \left\| \vu^1 - \vu^2 \right\|^2_{W^{m,2}} \right)
\\
& \leq c(R)  \left[ \left( 1 + \sum_{j=1}^2 \left( \| r^j \|_{W^{m+1,2}}^2 + \| \vu^j \|_{W^{m+2,2}}^2 \right)\right)
\left( \left\| r^1 - r^2 \right\|^2_{W^{m,2}} + \left\| \vu^1 - \vu^2 \right\|^2_{W^{m,2}} \right) \right] \dt\\
& +   \Big[ \varphi_R \left( \| \vu^1 \|_{W^{2, \infty}} \right) \partial^\alpha_x \mathbb{F}(r^1, \vc{u}^1) - \varphi_R \left( \| \vu^2 \|_{W^{2, \infty}} \right) \partial^\alpha_x \mathbb{F}(r^2, \vc{u}^2)
\Big] \cdot \partial^\alpha_x (\vc{u}^1 - \vc{u}^2 )  {\rm d}W,
\end{split}
\end{equation*}
where $m > \frac{N}{2} + 2$.
Let us now set
$$G(t)=c(R) \left( 1 + \sum_{j=1}^2 \left( \| r^j(t) \|_{W^{m+1,2}}^2 + \| \vu^j (t)\|_{W^{m+2,2}}^2 \right)\right)$$
and observe that if $s\geq m+1$ then the a priori estimates from Subsection \ref{UNIF} imply in particular that $G\in L^1(0,T)$ a.s. Applying the It\^o formula to the product we therefore obtain
\begin{align*}
&\D\left[\mathrm{e}^{-\int_0^t G(\sigma)\D\sigma}\Big(\|r^1-r^2\|_{m,2}^2+\|\vu^1-\vu^2\|_{m,2}^2\Big)\right]\\
&=-G(t)\mathrm{e}^{-\int_0^t G(\sigma)\D\sigma}\Big(\|r^1-r^2\|_{m,2}^2+\|\vu^1-\vu^2\|_{m,2}^2\Big)\dt\\
&\qquad+\mathrm{e}^{-\int_0^t G(\sigma)\D\sigma}\D\Big(\|r^1-r^2\|_{m,2}^2+\|\vu^1-\vu^2\|_{m,2}^2\Big)\\
&\leq   \mathrm{e}^{-\int_0^t G(\sigma)\D\sigma}\Big[ \varphi_R \left( \| \vu^1 \|_{W^{2, \infty}} \right) \partial^\alpha_x \mathbb{F}(r^1, \vc{u}^1) - \varphi_R \left( \| \vu^2 \|_{W^{2, \infty}} \right) \partial^\alpha_x \mathbb{F}(r^2, \vc{u}^2)
\Big] \cdot \partial^\alpha_x (\vc{u}^1 - \vc{u}^2 )  {\rm d}W(t).
\end{align*}
%Assuming now that $s \geq m + 2$, we may introduce, in accordance with (\ref{RE2}), a stopping time
%\[
%\tau_N = \inf \left\{ t \in [0,T]\ \Big|\ \ \sum_{j=1}^2 \left( \| r^j(t) \|_{W^{m+1,2}}^2 + \| \vu^j(t) \|_{W^{m+2,2}}^2 \right) \geq N \right\}.
%\]
%Consequently, integrating (\ref{UU4}) on $[0, \tau_N]$ and taking expectations, we may infer that
Integrating over $[0,t]$ and taking expectation we observe that the stochastic integral vanishes due to the assumptions on $r,\vu$ in Definition \ref{def:strsolmart} and consequently we may infer that
%Assuming now that $s \geq m + 2$, we may introduce, in accordance with (\ref{RE2}), a stopping time
%\[
%\tau_N = \inf \left\{ t \in [0,T]\ \Big|\ \ \sum_{j=1}^2 \left( \| r^j(t) \|_{W^{m+1,2}}^2 + \| \vu^j(t) \|_{W^{m+2,2}}^2 \right) \geq N \right\}.
%\]
%Consequently, integrating (\ref{UU4}) on $[0, \tau_N]$ and taking expectations, we may infer that
\[
\E \left[\mathrm{e}^{-\int_0^t G(\sigma)\D\sigma}\Big( \left\| r^1(t) - r^2 (t)\right\|^2_{W^{m,2}} + \left\| \vu^1(t) - \vu^2 (t)\right\|^2_{W^{m,2}} \Big)\right]  = 0
\]
whenever
\[
\E \left[ \left\| r^1_0 - r^2_0 \right\|^2_{W^{m,2}} + \left\| \vu^1_0 - \vu^2_0 \right\|^2_{W^{m,2}} \right] = 0.
\]
%Finally, we let $N \to \infty$ observing that, in view of (\ref{RE2}), $\tau_N \to T$ a.s.
Since
$$\mathrm{e}^{-\int_0^t G(\sigma)\D\sigma}>0\qquad\p\text{-a.s.}$$
and the trajectories of $r^i,\vu^i$, $i=1,2,$ are continuous in $W^{m,2}(\mt)$,
the pathwise
uniqueness  from Theorem \ref{thm:appr} follows.

\subsection{\textcolor{black}{Existence of a strong pathwise approximate solution}}
\label{subsec:pathwiseexist}

In order to complete the proof of Theorem \ref{thm:appr}, we make use of
the Gy\"{o}ngy--Krylov characterization of convergence in probability introduced in \cite[Lemma 1.1]{krylov}.
It applies to situations when pathwise uniqueness and existence of a martingale solution are valid and allows to establish existence of a pathwise solution.
%For readers' convenience we recall a version of this result recently proved in \cite[Prop. A.4]{BrFeHo2015B}.
%
%As we are dealing with weak topologies of Banach spaces the classical version does not apply. However, we have recently shown that the result also holds in quasi-Polish spaces, see
\begin{Lemma} \label{GK} %{\rm [Gy\"{o}ngy-Krylov]}
Let $(\mathcal{X},\tau)$ be a Polish space and $\left\{ Y_n; n \in \mathbb{}{N} \right\}$ a family of random variables ranging in $\mathcal{X}$. Let
\[
\nu_{m,n} \equiv \prst \left[ [Y_m, Y_n ] \in B \right], \ B \ \mbox{a Borel set in}\ \mathcal{X} \times \mathcal{X}
\]
be the collection of joint laws.
Then $Y_n$ converges in probability only if any subsequence of joint probability laws $\{ \nu_{m_k, n_k} \}_{k \geq 0}$
contains a weakly converging subsequence to a $\nu$ such that
\[
\nu \left[ (u,v) \in \mathcal{X} \times \mathcal{X}, u = v \right] = 1.
\]
\end{Lemma}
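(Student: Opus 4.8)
The statement is in fact an equivalence, and it is the converse implication — the condition on the joint laws forces convergence in probability — that is invoked later; I will establish both directions. Fix a complete metric $d$ inducing $\tau$ on $\mathcal{X}$, equip $\mathcal{X}\times\mathcal{X}$ with the product metric, and recall that convergence in probability of $\mathcal{X}$-valued random variables is metrized by $\mathrm{d}_{\prst}(X,Y)=\E[\min\{1,d(X,Y)\}]$, and that the space of such random variables is then itself complete. In particular a sequence converges in probability if and only if it is Cauchy in probability, and Cauchyness will be quantified via the elementary Markov-type bound $\E[\min\{1,d(X,Y)\}]\leq\varepsilon+\prst[d(X,Y)\geq\varepsilon]$.

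For the easy implication, suppose $Y_n\to Y$ in probability. Then the pair $(Y_m,Y_n)$ converges in probability in $\mathcal{X}\times\mathcal{X}$ to $(Y,Y)$ as $m,n\to\infty$, hence the joint laws $\nu_{m,n}$ converge weakly — along the full sequence, hence a fortiori along any subsequence — to the law of $(Y,Y)$, which is concentrated on the diagonal $\Delta=\{(u,v):u=v\}$ since $Y=Y$ almost surely. This gives the stated conclusion.

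For the decisive implication, assume the hypothesis on the joint laws and argue by contradiction. If $(Y_n)$ does not converge in probability then, by completeness, it is not Cauchy in probability, so there are $\varepsilon,\delta>0$ and indices $m_k<n_k\to\infty$ with $\prst[d(Y_{m_k},Y_{n_k})\geq\varepsilon]\geq\delta$ for all $k$. Applying the hypothesis to the subsequence $\{\nu_{m_k,n_k}\}$, a further subsequence converges weakly to some $\nu$ with $\nu(\Delta)=1$. Since $A_\varepsilon:=\{(u,v)\in\mathcal{X}\times\mathcal{X}:d(u,v)\geq\varepsilon\}$ is closed and disjoint from $\Delta$, we have $\nu(A_\varepsilon)=0$, and the portmanteau theorem yields $\delta\leq\limsup_j\nu_{m_{k_j},n_{k_j}}(A_\varepsilon)\leq\nu(A_\varepsilon)=0$, a contradiction. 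Hence $(Y_n)$ is Cauchy, and thus convergent, in probability.

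The argument is short, so the "main obstacle" is only a matter of invoking the right background facts cleanly: the completeness of the space of $\mathcal{X}$-valued random variables under $\mathrm{d}_{\prst}$ (which is what lets one pass from "not convergent" to "not Cauchy" in probability), and the correct use of the portmanteau theorem for the \emph{closed} set $A_\varepsilon$, which is precisely where the separation of $A_\varepsilon$ from the diagonal enters. Everything else is bookkeeping with subsequences.
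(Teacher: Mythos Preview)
Your proof is correct. Note, however, that the paper does not actually prove this lemma: it is quoted verbatim as the Gy\"ongy--Krylov characterization \cite[Lemma~1.1]{krylov} and then applied without further argument. Your argument is essentially the standard one from that reference --- the contradiction via failure of the Cauchy property, combined with the portmanteau bound on the closed set $A_\varepsilon=\{(u,v):d(u,v)\geq\varepsilon\}$ --- so there is nothing to compare against in the paper itself.
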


We start with a regular initial initial data corresponding to $s > \frac{N}{2} + 3$ required for pathwise uniqueness of
strong solutions to the approximate problem \eqref{E3'}, \eqref{E4'}.
Going back to the construction of approximate solution we
denote by $\mu_{m,n}$ the  joint law  of
$$(r_m,\bfu_m,r_n,\bfu_n)\quad\text{on the space}\quad \mathcal{X}_r\times \mathcal{X}_{\bfu}\times \mathcal{X}_r\times \mathcal{X}_{\bfu},$$
%where
%\[
%\tilde{\mathcal{X}}_r = C([0,T]; W^{p,2}(\mt)),\ \tilde{\mathcal{X}}_\vu = C([0,T]; W^{p,2}(\mt))
%\ \mbox{for some}\ s > p > \frac{N}{2} + 4,
%\]
where $r_m$, $\vu_n$, $r_n$, $\vu_n$ are the Galerkin solutions.
In addition, denoting $\mu_W$ the law of $W$ on $\mathcal{X}_W$, we introduce
the extended path space
$$\mathcal{X}^J= \mathcal{X}_r\times \mathcal{X}_{\bfu}\times \mathcal{X}_r\times \mathcal{X}_{\bfu}\times\mathcal{X}_W$$
and denote by $\nu_{m,n}$  the joint law of
$$(r_m,\bfu_m,r_n,\bfu_n,W)\quad\text{on}\quad\mathcal{X}^J.$$
%
%Note that we have to replace the original spaces $\mathcal{X}_r$, $\mathcal{X}_{\vu}$ introduced in Subsection \ref{subsec:comp}
%by $\tilde{\mathcal{X}}_r$, $\tilde{\mathcal{X}}_{\vu}$ as Lemma \ref{GK} requires Polish spaces. Fortunately, we have continuous embedding
%\[
%\mathcal{X}_r \hookrightarrow \tilde{\mathcal{X}}_r, \
%\mathcal{X}_{\vu} \hookrightarrow \tilde{\mathcal{X}}_{\vu}
%\]
The following result follows easily from the arguments of Subsection \ref{subsec:comp}.

\begin{Proposition}
The collection $\{\nu_{m,n};\,m,n\in\mn\}$ is tight on $\mathcal{X}^J$.
\end{Proposition}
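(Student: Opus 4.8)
The plan is to reduce tightness of the augmented family $\{\nu_{m,n}\}$ to the tightness results already established in Subsection~\ref{subsec:comp}, namely Propositions~\ref{prop:bfutightness} and~\ref{proprtightness}. The key observation is that tightness on a finite product space follows from tightness of each marginal, so it suffices to treat the five components $r_m$, $\bfu_m$, $r_n$, $\bfu_n$, $W$ separately.

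First, I would note that the Galerkin solutions $(r_n,\bfu_n)$ constructed in Subsection~\ref{subsec:galerkin} satisfy the uniform estimate \eqref{E14''}, which is \emph{independent of $n$} (it depends only on $R$, $T$, $s$ and the moments of the initial data). Therefore the very same decomposition $\bfu_n=\bfY_n+\bfZ_n$ used in the proof of Proposition~\ref{prop:bfutightness}, together with the compact embedding
\[
C([0,T];W^{s,2}(\mt))\cap C^\gamma([0,T];L^2(\mt))\hookrightarrow\hookrightarrow C([0,T];W^{\beta,2}(\mt)),\qquad \gamma>0,\ \beta<s,
\]
shows that $\{\mu_{\bfu_n};\,n\in\mn\}$ is tight on $\mathcal X_\bfu$; and since this argument is uniform in the index, $\{\mu_{\bfu_m};\,m\in\mn\}$ is tight as well. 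The analogous statement for $\{\mu_{r_n}\}$ and $\{\mu_{r_m}\}$ on $\mathcal X_r$ follows exactly as in Proposition~\ref{proprtightness}, using equation \eqref{E3'} and again \eqref{E14''}. Finally, the law $\mu_W$ of $W$ on $\mathcal X_W$ is a single Radon measure on the Polish space $\mathcal X_W$, hence tight.

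With all five marginals tight, one concludes as in Corollary~\ref{cor:tight}: given $\eta>0$, choose compact sets $K^1_\eta\subset\mathcal X_r$, $K^2_\eta\subset\mathcal X_\bfu$, $K^3_\eta\subset\mathcal X_r$, $K^4_\eta\subset\mathcal X_\bfu$, $K^5_\eta\subset\mathcal X_W$ such that each marginal of $\nu_{m,n}$ charges the complement of the corresponding set by at most $\eta/5$, uniformly in $m,n$; then the compact set $K^1_\eta\times K^2_\eta\times K^3_\eta\times K^4_\eta\times K^5_\eta\subset\mathcal X^J$ satisfies
\[
\nu_{m,n}\bigl(\mathcal X^J\setminus(K^1_\eta\times K^2_\eta\times K^3_\eta\times K^4_\eta\times K^5_\eta)\bigr)\leq\eta\qquad\text{for all }m,n\in\mn,
\]
which is precisely tightness of $\{\nu_{m,n};\,m,n\in\mn\}$ on $\mathcal X^J$.

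I do not expect any genuine obstacle here; the statement is essentially a bookkeeping corollary of Subsection~\ref{subsec:comp}, and the only mild point to keep in mind is that one must invoke the $n$-independence of \eqref{E14''} so that the \emph{same} tightness argument applies simultaneously to the two copies of the Galerkin sequence indexed by $m$ and by $n$.
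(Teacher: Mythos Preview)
Your proposal is correct and matches the paper's approach: the paper simply states that the result ``follows easily from the arguments of Subsection~\ref{subsec:comp}'' without further detail, and you have spelled out exactly that reduction to the marginal tightness established in Propositions~\ref{prop:bfutightness} and~\ref{proprtightness} together with the product-space argument of Corollary~\ref{cor:tight}.
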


Let us take any subsequence $\{\nu_{m_k,n_k};\,k\in\mn\}$. By the Skorokhod representation theorem, we infer (for a further subsequence but without loss of generality we keep the same notation) the existence of a probability space $(\bar{\Omega},\bar{\mf},\bar{\prst})$ with a sequence of random variables
$$(\hat r_{n_k},\hat\bfu_{n_k},\check r_{m_k},\check\bfu_{m_k},\bar W_k),\quad k\in\mn,$$
conver\-ging almost surely in $\mathcal{X}^J$ to a random variable
$$(\hat r,\hat\bfu,\check r,\check\bfu,\bar W)$$
and
$$\bar{\prst}\big((\hat r_{n_k},\hat\bfu_{n_k},\check r_{m_k},\check\bfu_{m_k},\bar W_k)\in\,\,\cdotp\big)=\nu^{n_k,m_k}(\cdot).$$
%$$\bar{\prst}\big((\hat\varrho,\hat\bfu,\hat\bfq,(\hat\psi,\hat\bfm),\check\varrho,\check\bfu,\check\bfq,(\check\psi,\check\bfm),\bar W)\in\,\,\cdotp\big)=\nu(\cdot).$$
Observe that in particular, $\mu^{n_k,m_k}$ converges weakly to a measure $\mu$ defined by
$$\mu(\cdot)=\bar{\prst}\big((\hat r,\hat\bfu,\check r,\check\bfu)\in\,\,\cdotp\big).$$
As the next step, we recall the technique established in Subsection \ref{subsec:ident}. Analogously, it can be applied to both
$$(\hat r_{n_k},\hat\bfu_{n_k},\bar W_k),\;(\hat r,\hat\bfu,\bar W)$$
and
$$(\check r_{m_k},\check\bfu_{m_k},\bar W_k),\;(\check r,\check\bfu,\bar W)$$
in order to show that $(\hat r, \hat\bfu,\bar W)$ and $(\check r, \check\bfu,\bar W)$ are strong martingale solutions to the approximate system \eqref{E3'}--\eqref{E4'}.
Finally, since $r_{n_k}(0)=r_{m_k}(0)=r_{0}$, it follows that
\begin{align*}
\bar\p(\hat r(0)=\check r(0))=1.
\end{align*}
%thus
%\begin{align*}
%\bar\p(\hat \varrho(0)=\check \varrho(0)\geq \underline{\varrho}>0)=1.
%\end{align*}
Since   $\vu_{n_k}(0)=P_{n_k}\vu_0$, $\vu_{m_k}(0)=P_{m_k}\vu_0$, we obtain for every $\ell\leq n_k\wedge m_k$
\begin{align*}
\bar\p(P_{\ell}\hat \vu_{n_k}(0)=P_\ell\check \vu_{m_k}(0))=\p(P_{\ell} \vu_{n_k}(0)=P_\ell\vu_{m_k}(0))=1
\end{align*}
which leads to
$$\bar\p(\hat \vu(0)=\check\vu(0))=1.$$
Hence, in accordance with the pathwise uniqueness established in Theorem \ref{thm:appr}, we get the desired conclusion
\begin{align*}
\mu&\Big((r_1,\bfu_1,r_2,\bfu_2);\;(r_1,\bfu_1)=(r_2,\bfu_2)\Big)=\bar{\prst}\Big((\hat r,\hat\bfu)=(\check r,\check\bfu)\Big)=1.
\end{align*}

Thus, we have all in hand to apply Lemma \ref{GK}, which implies that the original sequence $(r_n,\bfu_n)$ defined on the initial probability space $(\Omega,\mathfrak F,\prst)$ converges in probability in the topology of $\mathcal{X}_r\times \mathcal{X}_{\bfu}$ to a random variable $(r,\bfu)$. Without loss of generality, we assume that the convergence is almost sure and again by the method from Subsection \ref{subsec:ident} we finally deduce that
the limit
is the unique strong pathwise solution to the approximate problem  \eqref{E3'}--\eqref{E4'}. Let us denote this solution by $(r_R, \vu_R)$.

\section{\textcolor{black}{Proof of the main result, Theorem \ref{thm:main}}}
\label{subsec:strong}

Throughout the remainder of the paper, we go back to the original problem \eqref{P1}--\eqref{P4} and prove Theorem \ref{thm:main}. Our approach relies  on the equivalence between \eqref{P1}--\eqref{P2} and \eqref{E3}--\eqref{E4} which is valid provided the density remains strictly positive, cf. Subsection \ref{subsec:symsyst}. In addition, introducing suitable stopping times allows us to work with  \eqref{E3'}--\eqref{E4'} instead of \eqref{E3}--\eqref{E4} and therefore we may apply the results of the previous section, namely, Theorem \ref{thm:appr}.
Nevertheless, there is an  additional difficulty that originates in  the fact that the initial condition is not assumed to be integrable in $\omega$ and the initial density is not bounded from below by a positive constant. Consequently, the a priori estimates from Subsection \ref{UNIF} are no longer valid and the initial condition has to be truncated for Theorem \ref{thm:appr} to be applicable.
For this reason, the proof of uniqueness as well as existence of a local strong pathwise solution is divided into two steps. First, we consider an additional assumption the initial data so that Theorem \ref{thm:appr} applies. Second, we avoid  this hypothesis.

\subsection{Uniqueness}
\label{s:un}

Let us first take an additional assumption that
\begin{equation} \label{DATA1}
\vr_0 \in  L^\infty(\Omega; \mathfrak{F}_0, \prst, W^{s,2}(\mt) ),\   \vu_0 \in L^\infty(\Omega; \mathfrak{F}_0, \prst, W^{s,2}(\mt,\mr^N) ),
\ \vr_0 > \underline{\vr} > 0
\end{equation}
for some deterministic constant $\underline{\vr} > 0$.
In this case, the pathwise uniqueness of \eqref{P1}--\eqref{P4} is a simple consequence of the pathwise uniqueness for \eqref{E3'}--\eqref{E4'} proved in Theorem \ref{thm:appr}.
To be more precise, let  $[\varrho^i,\vu^i,(\mathfrak{t}_R^i),\mathfrak{t}^i]$, $i=1,2,$ be two maximal strong pathwise solutions to \eqref{P1}--\eqref{P4} starting from $[\varrho_0,\vu_0]$ satisfying \eqref{DATA1}.
Then
$$\left[r^i:=\sqrt{\frac{2a\gamma}{\gamma-1}}{(\varrho^i)}^{\frac{\gamma-1}{2}},\vu^i\right],\quad i=1,2,$$
both solve \eqref{E3'}--\eqref{E4'} up to the stopping time $\mathfrak{t}_R^1\wedge\mathfrak{t}_R^2$ and their initial conditions coincide. Besides, the a priori estimates from Subsection \ref{UNIF} as well as the pathwise uniqueness from Subsection \ref{subsec:uniq} apply up to the stopping time $\mathfrak{t}_R^1\wedge\mathfrak{t}_R^2$
and we deduce that
$$
\p\Big([\varrho^1,\vu^1](t\wedge\mathfrak{t}^1_R\wedge\mathfrak{t}^2_R)=[\varrho^2,\vu^2](t\wedge\mathfrak{t}^1_R\wedge\mathfrak{t}^2_R),\ \text{for all }t\in[0,T]\Big)=1.
$$
Sending $R\to\infty$ implies by dominated convergence
$$
\p\Big([\varrho^1,\vu^1](t\wedge\mathfrak{t}^1\wedge\mathfrak{t}^2)=	[\varrho^2,\vu^2](t\wedge\mathfrak{t}^1\wedge\mathfrak{t}^2),\ \text{for all }t\in[0,T]\Big)=1.
$$
As a consequence, the two solutions coincide up to the stopping time $\mathfrak{t}^1\wedge\mathfrak{t}^2$ and due to maximality of  $\mathfrak{t}^1$ as well as $\mathfrak{t}^2$, it necessarily follows that $\mathfrak{t}^1=\mathfrak{t}^2$ a.s. This completes the proof of uniqueness under the additional assumption \eqref{DATA1}.

Let $(\varrho_0,\vu_0)$ satisfy the hypotheses of Theorem \ref{thm:main}, define the set
$$ \Omega_{K}=\left\{\omega\in\Omega \ \Big|\ \|\vu_0(\omega)\|_{s,2}<K, \ \|r_0(\omega)\|_{s,2}<K,\ \inf_{\mt} r_0(\omega)>\frac{1}{K}\right\}$$
and note that $\Omega=\cup_{K\in\mr}\Omega_{K}$.
Therefore, since $\Omega_K$ is $\mathfrak{F}_0$-measurable, the a priori estimates from Subsection \ref{UNIF} can be employed on $\Omega_K$ to obtain
\begin{align} \label{E14''}
&\E\bigg[ \ind_{\Omega_K}\left( \sup_{t\in[0,T\wedge\mathfrak{t}^i_R]}\left\| (r^i(t),\bfu^i(t)) \right\|^2_{s,2}  +  \int_0^{T\wedge\mathfrak{t}^i_R}\|\vu^i(t)\|_{s+1,2}^2 \dt \right)^p \bigg]
\lesssim c(R,T, s,K).
\end{align}
Accordingly, the method of pathwise uniqueness from Subsection \ref{subsec:uniq} can be applied on $\Omega_K$ which yields
$$
\p\Big(\ind_{\Omega_K}[\varrho^1,\vu^1](t\wedge\mathfrak{t}^1_R\wedge\mathfrak{t}^2_R)=\ind_{\Omega_K}[\varrho^2,\vu^2](t\wedge\mathfrak{t}^1_R\wedge\mathfrak{t}^2_R),\ \text{for all }t\in[0,T]\Big)=1
$$
and since $\ind_{\Omega_K}\to \ind_\Omega$, $\mathfrak{t}^i_R\to\mathfrak{t}^i$, $i=1,2,$ a.s., we may send $R,K\to\infty$ and apply the  dominated convergence theorem to deduce that
$$
\p\Big([\varrho^1,\vu^1](t\wedge\mathfrak{t}^1\wedge\mathfrak{t}^2)=	[\varrho^2,\vu^2](t\wedge\mathfrak{t}^1\wedge\mathfrak{t}^2),\ \text{for all }t\in[0,T]\Big)=1.
$$
The uniqueness part of Theorem \ref{thm:main} is thus complete.

\subsection{\textcolor{black}{Existence of a local strong solution for bounded initial data}}
\label{ex1}

Finally, we have all in hand to go back to our original problem \eqref{P1}--\eqref{P4} and establish the existence of a local strong pathwise solution with an a.s. strictly positive stopping time.
Let us first take the additional assumption \eqref{DATA1}.
Having constructed strong solutions for the approximate problem \eqref{E3'}--\eqref{E4'} in Subsection \ref{subsec:pathwiseexist}, which we denoted by $(r_R, \vu_R)$, we define
\[
\tau_R = \inf \left\{ t \in [0,T]\ \Big| \ \| \vu_R (t) \|_{2,\infty} \geq R \right\}
\]
(with the convention $\inf \emptyset=T$). Since $\vu_R$ has continuous trajectories in $W^{s,2}(\mt,\mr^N)$ which is embedded into $ W^{2,\infty}(\mt,\mr^N)$, $\tau_R$ is a well-defined stopping time. Moreover, due to \eqref{DATA1}, the stopping time $\tau_R$  is a.s. positive
provided $R$ is chosen large enough. Next, we recall that, as stated in Theorem \ref{thm:appr},
$$r_R\geq \underline{r}_R>0\quad\text{for a.e.}\quad(\omega,t,x),$$
for some deterministic constant $\underline{r}_R$. Consequently, the density given by
$${\varrho}:=\left(\frac{\gamma-1}{2a\gamma}\right)^{\frac{1}{\gamma-1}}r^{\frac{2}{\gamma-1}}_R,$$
remains uniformly positive as well. Therefore, the unique solution $(r_R,\vu_R)$ of the approximated system \eqref{E3'}--\eqref{E4'} with the initial condition
$$
\left(r_0:=\sqrt{\frac{2a\gamma}{\gamma-1}}{\varrho_0}^{\frac{\gamma-1}{2}} , \vu_0\right)
$$
generates a local strong
pathwise solution
\[
\left({\varrho}:=\left(\frac{\gamma-1}{2a\gamma}\right)^{\frac{1}{\gamma-1}}r^{\frac{2}{\gamma-1}}_R ,\ \vu_R,\tau_R\right)
\]
of the original problem \eqref{P1}--\eqref{P4} with the initial condition $(\varrho_0,\vu_0)$.

\subsection{\textcolor{black}{Existence of a local strong solution for general initial data}}
\label{subsec:ex2}

In order to relax the additional assumption upon the initial datum \eqref{DATA1}, consider again a solution $(r_R,\vu_R)$ of the approximate problem \eqref{E3'}--\eqref{E4'}; now with a stopping time
\[
\tau_K = \tau^1_K \wedge \tau^2_K \wedge \tau^3_K,
\]
\[
\begin{split}
\tau^1_K &= \inf \left\{ t \in [0,T]\ \Big| \ \| \vu_R (t) \|_{s,2} \geq K \right\} \\
\tau^2_K &= \inf \left\{ t \in [0,T]\ \Big| \ \| r_R (t) \|_{s,2} \geq K \right\} \\
\tau^3_K &= \inf \left\{ t \in [0,T]\ \Big| \ \inf_{\mt} r_R (t)  \leq \frac{1}{K} \right\}
\end{split}
\]
with $K=K(R)\to\infty$ as $R\to\infty$ and
\[
K(R) < R \min \left\{ 1 , \frac{1}{c_{1,\infty}}, \frac{1}{c_{2, \infty}} \right\},
\]
where $c_{1,\infty}$, $c_{2, \infty}$ are the constants in the embedding inequalities
\[
\| r \|_{1,\infty} \leq c_{1,\infty} \| r \|_{s,2}, \ \| \vu \|_{2, \infty} \leq c_{2, \infty} \| \vu \|_{s,2}.
\]
The stopping time $\tau_K$ is chosen in such a way that on $[0,\tau_K)$
$$
\sup_{t\in[0,\tau_K]}\|\vu_R(t)\|_{2,\infty}<R,\quad\sup_{t\in[0,\tau_K]}\|r_R(t)\|_{1,\infty}< R,\quad \inf_{t\in[0,\tau_K]}\inf_{\mt}r_R(t)>\frac{1}{R}\quad\p\text{-a.s.}
$$

Next we observe that Theorem \ref{thm:appr} can be used to construct solutions with the stopping time $\tau_{K}$ for general initial data
as in Theorem \ref{thm:main}. Indeed let $(r_0,\vu_0)$ be an $\mathfrak{F}_0$-measurable random variable taking values in $W^{s,2}(\mt)\times W^{s,2}(\mt,\mr^N)$ such that $r_0 > 0$ $\prst$-a.s.
and define the set
\[
U_{K(R)} = \left\{ [r, \vu] \in W^{s,2}(\mt)\times W^{s,2}(\mt,\mr^N)\ \Big|\ \| r \|_{s,2} < K, \ \| \vu \|_{s,2} < K, \ r > \frac{1}{K} \right\}.
\]
%and
%\[
%\tilde{r}_0(\omega) = \begin{cases}
% r_0(\omega) ,&\quad \mbox{if}\quad [r_0(\omega), u_0(\omega)] \in U_K, \\
% \underline{r} \in \left( \frac{1}{R}, \frac{1}{K} \right), &\quad
%\mbox{otherwise}. \end{cases}
%\]
Theorem \ref{thm:appr} then provides a (unique) solution $[{r}_M, u_M]$ to \eqref{E3'}--\eqref{E4'} with $R=M$ and with the initial condition $[{r}_0, \vu_0]\ind_{ [r_0, \vu_0] \in \left\{ U_{K(M)} \setminus \cup_{J=1}^{M-1} U_{K(J)} \right\}}$. It  also solves the original system \eqref{E3}, \eqref{E4} up to the stopping time $\tau_{K(M)}$.
Next, we find that
\begin{equation}\label{eq:limsol}
[r, \vu] = \sum_{M=1}^ \infty [r_M, \vu_M] \ind_{ [r_0, \vu_0] \in \left\{ U_{K(M)} \setminus \cup_{J=1}^{M-1} U_{K(J)} \right\}},
\end{equation}
solves the same problem with the initial data $[r_0,\vu_0]$ up to the a.s. strictly positive stopping time
\[
\tau = \sum_{M=1}^\infty \tau_{K(M)} \ind_{ [r_0, \vu_0] \in \left\{ U_{K(M)} \setminus \cup_{J=1}^{M-1} U_{K(J)} \right\}}.
\]
Note that in particular that $[r,\vu]$ has a.s. continuous trajectories in $W^{s,2}(\mt)\times W^{s,2}(\mt,\mr^N)$ and the velocity also belongs to
$ L^2(0,T;W^{s+1,2}(\mt,\mr^N))$ $\p$-a.s.
Indeed, there exists a disjoint collection of sets $\Omega_M\subset\Omega$, $M\in\mn$, satisfying $\cup_M \Omega_M=\Omega$ such that $[r,\vu](\omega)=[r_M,\vu_M](\omega)$ for a.e. $\omega\in\Omega_M$. And due to Theorem \ref{thm:appr}, the trajectories of $[r_M,\vu_M]$ are a.s. continuous in $W^{s,2}(\mt)\times W^{s,2}(\mt,\mr^N)$. On the other hand, we loose the integrability in $\omega$ as the initial condition is only assumed to be in $W^{s,2}(\mt)\times W^{s,2}(\mt,\mr^N)$ a.s. and no integrability in $\omega$ is assumed. In particular, the estimate \eqref{RE2} is no longer valid for the solution \eqref{eq:limsol}.

To conclude, after the straightforward transformation to the original variables $[\vr,\vu]$, we obtain the existence of a local strong pathwise solution to problem \eqref{P1}--\eqref{P4} with a strictly positive stopping time $\tau$.

\subsection{Existence of a maximal strong solution}
\label{ex3}

In order to extend the solution $(\vr,\vu)$ to a maximal time of existence $\mathfrak{t}$, let $\mathcal T$ denote the set of all possible a.s. strictly positive stopping times corresponding to the solution starting from the initial datum $(\vr_0,\vu_0)$. According to the above proof, this set is nonempty. Moreover, it is closed with respect to finite minimum and finite maximum operations. More precisely,
$$\sigma_1,\sigma_2\in\mathcal{T} \Rightarrow \sigma_1\vee\sigma_2\in\mathcal{T},$$
and
$$\sigma_1,\sigma_2\in\mathcal{T} \Rightarrow \sigma_1\wedge\sigma_2\in\mathcal{T},$$
for any stopping time $\sigma_2$. Let $\mathfrak{t}=\sup_{\sigma\in\mathcal T} \sigma$.
Then
we may choose an increasing sequence $(\sigma_M)\subset\mathcal{T}$ such that $\lim_{M\to\infty}\sigma_M=\mathfrak{t}$ a.s. Let $[\vr_M,\vu_M]$ be the corresponding sequence of solutions on $[0,\sigma_M]$. Due to uniqueness, this sequence defines a solution $(\vr,\vu)$ on $\cup_M[0,\sigma_M]$ by setting $(\vr,\vu):=(\vr_M,\vu_M)$ on $[0,\sigma_M]$.
For each $R\in\mn$ we now define
\[
\tau_R=\mathfrak{t} \wedge \inf \left\{ t \in [0,T]\ \Big| \ \| \vu (t) \|_{2,\infty} \geq R \right\}.
\]
Then $(\vr,\vu)$ is a solution on $[0,\sigma_M\wedge\tau_R]$ and sending $M\to\infty$ we obtain that $(\vr,\vu)$ is a solution on $[0,\tau_R]$. Note that $\tau_R$ is not a.s. strictly positive unless $\|\vu_0\|_{2,\infty}<R.$ Nevertheless, since $\vu_0\in W^{s,2}(\mt,\mr^N)$ a.s. we may deduce that for almost every $\omega$ there exists $R=R(\omega)$ such that $\mathfrak{t}_{R(\omega)}(\omega)>0$.
To guarantee the strict positivity, we combine the two sequences of stopping times $(\sigma_R)$ and $(\tau_R)$ and define $\mathfrak{t}_R=\sigma_R\vee\tau_R$. Then each triplet $(\vr,\vu,\mathfrak{t}_R)$, $R\in\mn$, is a local strong pathwise solution with an a.s. strictly positive stopping time. Next,  we observe that, by repeating the construction of a local strong pathwise solution,  a solution on $[0,\mathfrak{t}_R]$ can be extended to a solution on $[0,\mathfrak{t}_R+\sigma]$ for an a.s. strictly positive stopping time $\sigma$. Thus, in order to show that $\mathfrak{t}_R<\mathfrak{t}$ on $[\mathfrak{t}<T]$, assume for a contradiction that $\p(\mathfrak{t}_R=\mathfrak{t}<T)>0$. Then we have $\mathfrak{t}_R+\sigma\in\mathcal{T}$ and hence $\p(\mathfrak{t}<\mathfrak{t}_R+\sigma)>0$ which contradicts the maximality of $\mathfrak{t}$. Consequently, $(\mathfrak{t}_R)$ is an increasing sequence of stopping times converging to $\mathfrak{t}$.
 Moreover, on the set $[\mathfrak{t}<T]$ we have that
$$\sup_{t\in[0,\mathfrak{t}_R]}\|\vu(t)\|_{2,\infty}\geq R.$$
%Finally, we combine the two sequences of stopping times $(\sigma_R)$ and $(\tau_R)$ and define $\mathfrak{t}_R=\sigma_R\vee\tau_R$. Then each triplet $(\vr,\vu,\mathfrak{t}_R)$, $R\in\mn$, is a local strong pathwise solution with an a.s. strictly positive stopping time. Besides, the sequence $(\mathfrak{t}_R)$ announces $\mathfrak{t}$ in the sense that  $\lim_{R\to\infty}\mathfrak{t}_R=\mathfrak{t}$ a.s. and \eqref{eq:blowup} holds true.
Thus, the existence part  of Theorem \ref{thm:main} is complete.

\def\cprime{$'$} \def\ocirc#1{\ifmmode\setbox0=\hbox{$#1$}\dimen0=\ht0
  \advance\dimen0 by1pt\rlap{\hbox to\wd0{\hss\raise\dimen0
  \hbox{\hskip.2em$\scriptscriptstyle\circ$}\hss}}#1\else {\accent"17 #1}\fi}

\end{document}